 \let\mathscr\relax
\numberwithin{equation}{section}
\newtheorem{thm}{Theorem}[section]
\newtheorem{cor}[thm]{Corollary}
\newtheorem{prop}[thm]{Proposition}
\newtheorem{lem}[thm]{Lemma}
\newtheorem{conj}[thm]{Conjecture}
\theoremstyle{definition}
\newtheorem{defn}[thm]{Definition}
\theoremstyle{remark}
\newtheorem{rem}[thm]{Remark}
\begin{document}
	\title{Least primitive root and simultaneous power non-residues}
	\author{Andrea Sartori}
	\address{ London School of Geometry and Number Theory, King's College London, Strand, London WC2R 2LS, England, Uk }
	\email{andrea.sartori.16@ucl.ac.uk}
	\maketitle
		\begin{abstract}
		Let $p$ be a prime and let $g(p)$ be the least primitive root modulo $p$. We prove that for any $\epsilon>0$ and $p$ large enough the bound
	\begin{align}
	g(p)\ll p^{\frac{1}{4\sqrt{e}}+\epsilon} \nonumber
	\end{align}
	holds for most prime $p$ such that $p-1$ does not have small prime factors, but $2$. We also give an explicit description of the exceptional set.   
\end{abstract}
	\section{Introduction}
	\subsection{Vinogradov's conjecture}
Let $p$ be a prime, we denote by  $g(p)$ the least primitive root modulo $p$  or in other words the smallest generator of the multiplicative group of integer modulo $p$. Vinogradov  \cite{V1,V2}  conjectured the following: 
\begin{conj}
	Let  $\epsilon>0$ then for all large enough primes $p$ one  has $g(p)\ll p^{\epsilon}$.
\end{conj}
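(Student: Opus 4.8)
The plan is to run the classical character-sum method for counting primitive roots and to push its two inputs — the main-term density and the cancellation in incomplete character sums — to the sharpest conceivable form. Fix $\epsilon>0$, let $p$ be large, and let $q_1,\dots,q_k$ be the distinct primes dividing $p-1$. An integer $n$ coprime to $p$ is a primitive root modulo $p$ if and only if it is a $q_i$-th power non-residue for every $i$, so the usual Möbius inversion over the divisors of $p-1$ gives, for $N(x)$ the number of primitive roots in $[1,x]$,
\begin{equation}
N(x)=\frac{\varphi(p-1)}{p-1}\sum_{d\mid p-1}\frac{\mu(d)}{\varphi(d)}\sum_{\substack{\chi\bmod p\\ \operatorname{ord}\chi=d}}\ \sum_{n\le x}\chi(n).
\end{equation}
The term $d=1$ is the main term $\tfrac{\varphi(p-1)}{p-1}\lfloor x\rfloor\gg x/\log\log p$, and the whole point is to show that with $x=p^{\epsilon}$ the remaining terms are $o(x/\log\log p)$; this forces $N(p^{\epsilon})\ge 1$, hence $g(p)\le p^{\epsilon}$, for all large $p$.

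Second, I would bound the error. The number of characters that occur, weighted by $1/\varphi(d)$, is $\sum_{d\mid p-1}|\mu(d)|=2^{\omega(p-1)}=p^{o(1)}$, so it is enough to prove a uniform estimate $\sum_{n\le x}\chi(n)\ll x\,p^{-\delta}$ for some fixed $\delta=\delta(\epsilon)>0$ and all non-principal $\chi\bmod p$, as soon as $x\ge p^{\epsilon}$. For the easier range $x\ge p^{1/4+\epsilon}$ this is exactly Burgess's bound, which is why the method already yields the unconditional $g(p)\ll p^{1/4+\epsilon}$; the entire content of Vinogradov's conjecture is to extend such cancellation all the way down to $x=p^{\epsilon}$. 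I would therefore isolate this as a single hypothesis — nontrivial cancellation for character sums over $[1,p^{\epsilon}]$, uniformly over non-principal $\chi$ — and supply it from one of two sources: (i) unconditionally, a sum–product / additive-combinatorial input giving power savings for short complete and bilinear sums of characters at scale $p^{\epsilon}$ (a Paley-graph-type statement); or (ii) the Riemann Hypothesis for Dirichlet $L$-functions, which via the explicit formula in fact gives far more, namely $g(p)\ll_{\epsilon}(\log p)^{A}$ for an absolute $A$.

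Third, to convert the input into the stated bound I would (a) pass from the smoothed form back to $\sum_{n\le x}\chi(n)$ by partial summation; (b) split the divisors $d\mid p-1$ according to whether all their prime factors are $\le(\log p)^{B}$ — where there are only $O(\log\log p)$ relevant primes and one may instead use Gallagher's larger sieve, decoupling that part from any $L$-function input — or contain a larger prime, of which there are at most $\log p/\log\log p$, each contributing essentially one character sum; and (c) recombine the two ranges by a union bound, absorbing the $2^{\omega(p-1)}=p^{o(1)}$ loss into $\epsilon$. The arithmetic of $p-1$ is what makes a careless union bound fatal: $\omega(p-1)$ can be as large as $(1+o(1))\log p/\log\log p$, so the small-prime part genuinely needs the larger sieve (or an equally efficient substitute) to keep $N(p^{\epsilon})$ positive.

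The main obstacle is, unambiguously, step two. There is at present no unconditional proof that $\sum_{n\le x}\chi(n)=o(x)$ for $x$ as small as $p^{\epsilon}$: Burgess's amplification saturates at exponent $1/4$ because the shifted-product (multiplicative-energy) manoeuvre it rests on requires $x$ to be a fixed positive power of $p$, and no method is known to break this barrier for an individual character. Any genuine route to Vinogradov's conjecture must either establish such sub-Burgess cancellation or assume GRH; everything else in the argument — the inclusion–exclusion identity, the density $\varphi(p-1)/(p-1)\gg 1/\log\log p$, the larger-sieve treatment of small prime factors, and the union bound over $2^{\omega(p-1)}$ characters — is routine once that input is in hand. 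This is precisely why the present paper retreats to a statement valid for almost all $p$, where mean-value estimates over the family of $L$-functions and the larger sieve can stand in for pointwise cancellation.
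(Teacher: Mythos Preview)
The statement is Vinogradov's \emph{conjecture}: the paper states it as an open problem and does not prove it, nor does it claim to. There is therefore no ``paper's own proof'' to compare against.

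Your proposal is not a proof either, and you say so yourself: the entire argument is conditional on a pointwise cancellation estimate $\sum_{n\le x}\chi(n)\ll x\,p^{-\delta}$ for $x=p^{\epsilon}$ uniformly over non-principal $\chi\bmod p$, which is not known unconditionally. The two ``sources'' you offer for this input are (i) an unproven sum--product statement and (ii) GRH; neither yields an unconditional proof. So what you have written is a correct discussion of what a proof would require and where the obstruction lies, not a proof of the conjecture. As such there is no genuine gap to name beyond the one you have already named.

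One minor correction to your closing sentence: the paper does not ``retreat'' to mean-value estimates over $L$-functions or the larger sieve. Its route is quite different from the character-sum template you sketch. It bounds each individual $q$-th power non-residue $\mathbf{g}(q)$ via Vinogradov's smooth-number argument (Proposition~\ref{Vin}), then explicitly \emph{constructs} a simultaneous non-residue as a product $\prod_i \mathbf{g}(A_i)^{m_i}$ with exponents $m_i$ controlled by the Jacobsthal function of certain divisors of $p-1$ (Corollary~\ref{Cor 2.1}). The probabilistic input is only used to control those Jacobsthal values and a sum $\sum_{q\mid p-1}\log\log q/\log q$ for most $p$, not to average character sums.
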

In \cite{V1,V2} he also gave the first non-trivial upper-bounds towards his conjecture, namely 
\begin{align}
 g(p)\ll 2^{\omega(p-1)+1}\frac{p-1}{\varphi(p-1)}p^{\frac{1}{2}} \nonumber
\end{align}
where $\omega(p-1)$ is the number of distinct prime factors of $p-1$ and $\varphi(p-1)$ is the Euler's totient function. In \cite{B1,B2} Burgess  made further progress proving that for any fixed $\epsilon>0$ and $p$ large enough we have
\begin{align}
 g(p)\ll p^{1/4+ \epsilon}. \nonumber
\end{align}
This bound is  the best unconditional result  known up-to-date. On the other hand under the Generalised Riemann Hypothesis, Ankeny \cite{A}, developing on the work of Linnik \cite{L1}, proved that 
\begin{align}
	g(p)\ll 2^{\omega(p-1)}\log^2p. \nonumber \nonumber
\end{align}
 Moreover, Vinogradov's conjecture is also known to be true for all but a small subset of the  primes:  Martin \cite{M} proved that for any $\epsilon>0$ there exist some constant $C>0$ such that bound $g(p)\ll \log^Cp$ holds for all primes up to $x$ with at most $O(x^{\epsilon})$ exceptions. Unfortunately, such result is purely \textquotedblleft existential" meaning  that, given a prime $p$, one cannot tell weather it belongs to the exceptional set or not. This motivated the search for improvements given some extra informations about $p$ as for example the prime factorization of $p-1$. In this spirit, Ford, Garaev and Konyagin \cite{FGK} proved that if $p-1$ has less then $\log\log p/2$ prime factors then we have the sharper bound $g(p)=p^{1/4-o(1)}$.  We follow such interest in proving our main result. 
	\subsection{Statement of the main result}
	To ease the exposition it will be useful to adopt the following piece of notation: we pick a very large parameter $x$ (which may be different at every occurrence) and denote by  $\mathbb{P}_x$ the discrete uniform measure on the primes up to $x$, and by $\mathbb{E}_x$ its expectation; moreover,  simply to avoid redundant sentences, we also make the following definition: 
	\begin{defn}
		We say that a prime $p$  is $y$-rough if  $p-1$ has no prime factors (but 2) smaller then $y$. 
	\end{defn}
We then prove the following: 
	\begin{thm}
		\label{Main theorem}
		For every $\epsilon>0$ and $\delta>0$  there exist some (large) $y=y(\epsilon,\delta)>0$ such that 
		\begin{align}
			\mathbb{P}_x\left(g(p)\ll p^{\frac{1}{4\sqrt{e}}+ \epsilon}\big\rvert\text{$p$ is $y$- rough}\right)>1-\delta .\nonumber
		\end{align}
			Moreover the exceptional set consists of those  primes which do not satisfies one of the following two conditions: 
		\begin{enumerate}
			\item uniformly on the divisors $d|p-1$ we have $j(d)\leq  10 \omega(d)$, 	where $j(d)$ is the Jacobsthal function \footnote{The constant 10 here has been chosen for simplicity of exposition and could be replaced with any constant bigger then 1 without affecting the proof.}.
			\item the following bound holds 
			
			\begin{align}
			\sum_{\substack{q|p-1 \\ q\neq 2 \text{prime}}}\frac{\log\log q}{\log q}\leq \epsilon/20 \nonumber.
			\end{align}
		\end{enumerate}
	\end{thm}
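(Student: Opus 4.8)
The plan is to split the statement into a deterministic part — \emph{if $p$ is $y$-rough and satisfies (1) and (2) then $g(p)\ll p^{1/(4\sqrt e)+\epsilon}$} — and a counting part — \emph{for $y$ large almost every $y$-rough $p\le x$ satisfies (1) and (2)}. Throughout write $m=\prod_{q\mid p-1,\,q\neq 2}q$, so that $y$-roughness means every prime dividing $m$ exceeds $y$, and recall that $n$ is a primitive root mod $p$ exactly when $\chi_q(n)\neq 1$ for every prime $q\mid p-1$, $\chi_q$ a fixed character of order $q$. First I would record the elementary facts making (2) do its work: since $1/q\le\log\log q/\log q$ for $q\ge 16$, hypothesis (2) gives $\sum_{q\mid p-1,\,q\neq 2}1/q\le\epsilon/20$, hence $\varphi(p-1)/(p-1)=\tfrac12\prod_{q\mid m}(1-1/q)\ge\tfrac12(1-\epsilon/20)$; and a short covering argument — in a window of $L$ consecutive integers the number of multiples of primes dividing $d$ is at most $L\sum_{q\mid d}1/q+\omega(d)$ — shows that $\sum_{q\mid d}1/q\le\tfrac12+\epsilon/20$ forces $j(d)\le\omega(d)/(\tfrac12-\epsilon/20)+2\le 10\,\omega(d)$ for every $d\mid p-1$, so (2) in fact implies (1). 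Thus it suffices to argue under (1) and (2), treating (1) as the convenient form in which the smooth-number step below uses the hypothesis.

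For the deterministic bound I would set $z=p^{1/(4\sqrt e)+\epsilon}$ and $x=p^{1/4+\epsilon/2}$ and assume, for contradiction, that there is no primitive root in $[1,z]$. Detecting primitive roots by M\"obius inversion over $d\mid\mathrm{rad}(p-1)$ and expanding ``$n$ is a $d$-th power mod $p$'' into the characters of order dividing $d$ gives
\[
\#\{n\le x:\ n\ \text{primitive root}\}=x\,\frac{\varphi(p-1)}{p-1}+O\Big(2^{\omega(p-1)}\max_{\chi\neq\chi_0}\Big|\sum_{n\le x}\chi(n)\Big|\Big),
\]
and since $\omega(p-1)=O(\log p/\log\log p)$ while Burgess' inequality yields $\sum_{n\le x}\chi(n)\ll x^{1-c\epsilon^2}$ at $x=p^{1/4+\epsilon/2}$, the error is $o(x)$. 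On the other hand, under the contradiction hypothesis every primitive root $n\le x$ either has a prime factor exceeding $z$ or is $z$-smooth; the first class has size at most $\sum_{z<q\le x}\lfloor x/q\rfloor=x\log u+o(x)$ by Mertens, where $u=\log x/\log z=\sqrt e\,(1+2\epsilon)/(1+4\sqrt e\,\epsilon)<\sqrt e$, so $\log u\le\tfrac12-4\epsilon$ for $\epsilon$ small. Comparing the two displays and using $\varphi(p-1)/(p-1)\ge\tfrac12(1-\epsilon/20)>\log u$, one is forced to conclude that there are $\gg\epsilon\,x$ primitive roots in $[1,x]$ that are $z$-smooth.

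The heart of the proof is then to show that this conclusion is impossible: when no prime $\le z$ is a primitive root, the $z$-smooth primitive roots up to $x$ number only $o(\epsilon x)$. Since every prime $\ell\le z$ then fails to be a primitive root, the primes up to $z$ are covered by the $\omega(p-1)$ sets $A_q=\{\ell\le z:\chi_q(\ell)=1\}$, and a $z$-smooth primitive root must, for each $q\mid p-1$, have a prime factor outside $A_q$ — equivalently, its set of prime factors lies in no single $A_q$ and, for every $q$, the product of its prime factors avoids the subgroup fixed by $\chi_q$. Bounding the number of $n\le x$ with this ``transversal'' structure is where (2) re-enters quantitatively — the tiny value of $\sum_{q\mid m}1/q$ makes the odd $\chi_q$ contribute negligibly to the relevant weighted count of $z$-smooth numbers — and where the Jacobsthal bound (1) enters combinatorially, controlling how the primes up to $z$ can distribute among the classes $A_q$ and excluding the clustered configurations that would produce too many transversal smooth numbers. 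Carrying out this estimate — reconciling smooth-number counts restricted to subsets of primes of prescribed logarithmic density with the coupling across the $\omega(p-1)$ characters — is the step I expect to be by far the most delicate, and it is presumably the reason the theorem must carry an exceptional set at all.

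Finally I would verify that (1) and (2) hold for all but a $\delta$-fraction of the $y$-rough primes $p\le x$; by the implication (2)$\Rightarrow$(1) above it is enough to treat (2). Conditionally on $p$ being $y$-rough, the expectation of $\sum_{q\mid p-1,\,q\neq 2}\log\log q/\log q$ equals $\sum_{y<q}\frac{\log\log q}{q\log q}+o(1)=O(\log\log y/\log y)$ since the series converges, so Markov's inequality bounds the relative density of $y$-rough $p\le x$ violating (2) by $O\!\big(\log\log y/(\epsilon\log y)\big)$, which is $<\delta$ once $y=y(\epsilon,\delta)$ is large; this needs only a routine uniform sieve estimate for $\sum_{p\le x}\tfrac{\log\log q}{\log q}\mathbf 1_{q\mid p-1}$. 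Combining this with the deterministic bound yields $\mathbb P_x\big(g(p)\ll p^{1/(4\sqrt e)+\epsilon}\mid p\ \text{$y$-rough}\big)>1-\delta$, with exceptional set contained in — in fact, after the reductions, equal to — the set of primes failing (1) or (2).
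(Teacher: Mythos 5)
Your proposal has a genuine gap at its central step, and it is located exactly where you flag it yourself. Paragraphs one, two and four are essentially sound: the observation that condition (2) forces $\sum_{q\mid d}1/q\le \tfrac12+\epsilon/20$ for every $d\mid p-1$ and hence $j(d)\le 10\,\omega(d)$ by the window-counting bound is correct (and is in fact a neat shortcut past the paper's entire Section 4, which establishes (1) by a separate Erd\H{o}s-style argument with Bombieri--Vinogradov input); the final Markov-inequality step for (2) is the same as the paper's Lemma \ref{technical}. But the deterministic heart of the argument is not proved. You reduce, by contradiction, to showing that if no $n\le z=p^{1/(4\sqrt e)+\epsilon}$ is a primitive root then the $z$-smooth primitive roots in $[1,x]$ with $x=p^{1/4+\epsilon/2}$ number $o(\epsilon x)$, and you then describe in qualitative terms how (1) and (2) ``should enter'' without carrying out any estimate. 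This is not a routine completion: unlike the classical Vinogradov argument for the least $d$-th power non-residue, the property ``is not a primitive root'' is not multiplicatively closed. If $\ell_1\le z$ is a $q_1$-th power residue but not a $q_2$-th power residue and $\ell_2\le z$ is the reverse, then $\ell_1\ell_2$ can perfectly well be a primitive root, so the hypothesis that all primes (or even all integers) up to $z$ fail to be primitive roots puts no obvious cap on the number of $z$-smooth primitive roots in $(z,x]$ --- already for $p-1=2q$ one expects $\gg x$ of them. So the claim you defer is, as stated, likely false, and the contradiction scheme does not close.

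The paper avoids this obstruction by going in the opposite direction: rather than counting primitive roots, it \emph{constructs} one. It first bounds the least $d$-th power non-residue $\mathbf g(d)$ for each divisor $d\mid p-1$ (Vinogradov's smooth-number argument, sharpened by Burgess for bounded $d$), then shows (Lemma \ref{mainlemma} and Corollary \ref{Cor 2.1}) that a suitable product $\prod_i \mathbf g(A_i)^{m_i}$ with exponents $m_i\le j(P(A_i))-1$ is a simultaneous non-residue for all $q\mid p-1$; condition (1) keeps the exponents of size $O(\omega)$ and condition (2) makes $\sum_i \omega(A_i)/u(d_i)$ smaller than $\epsilon$, so the product costs only $p^{\epsilon}$ beyond the dominant Burgess factor $p^{1/(4\sqrt e)+\epsilon}$. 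If you want to salvage your write-up, you should replace your third paragraph with this multiplicative construction (the Jacobsthal function enters precisely to choose the exponents $m_i$, not to control a covering of the primes up to $z$ by residue classes).
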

Conditions $(1)$ and $(2)$ come out naturally from our method and so we differ commenting on them to later. On the other hand,  we observe that, since $y$ only depends on $\epsilon$ and $\delta$, a simple inclusion-exclusion argument gives:
\begin{cor}
	\label{Main Corollary}
	For every $\epsilon>0$ there exists some $c=c(\epsilon)>0$ and $y=y(\epsilon)>0$ such that 
	\begin{align}
		\mathbb{P}_x\left( \text{$p$ satisfies conditions $(1)$, $(2)$ and is $y$-rough}\right)>c \nonumber
	\end{align}
	and so 
	\begin{align}
	\mathbb{P}_x\left( g(p)\ll p^{\frac{1}{4\sqrt{e}}+ \epsilon}\right)>c. \nonumber
	\end{align}
\end{cor}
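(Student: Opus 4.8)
The plan is to derive Corollary~\ref{Main Corollary} from Theorem~\ref{Main theorem} by fixing $\delta$ at a harmless constant and then checking that the conditioning event in Theorem~\ref{Main theorem} is not rare. First I would apply Theorem~\ref{Main theorem} with $\delta=\tfrac12$ (any fixed $\delta\in(0,1)$ would do); this produces a value $y=y(\epsilon,\tfrac12)$, which I relabel $y=y(\epsilon)$. Write $R$ for the set of $y$-rough primes up to $x$ and $C$ for the primes up to $x$ satisfying both conditions $(1)$ and $(2)$.

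The next step is a lower bound for $\mathbb{P}_x(p\text{ is }y\text{-rough})=\#R/\pi(x)$ that is uniform in $x$. Since $y$ depends only on $\epsilon$, the integer $Q:=\prod_{2<q<y}q$ is fixed, and being $y$-rough is exactly the requirement that $p\bmod Q$ avoid the residue class $1$ modulo every odd prime $q<y$; equivalently, $p\bmod Q$ must lie in a prescribed union of $\prod_{2<q<y}(q-2)$ reduced residue classes modulo $Q$ (one discards both $0$ and $1$ modulo each such $q$). By the prime number theorem for arithmetic progressions to the \emph{bounded} modulus $Q$ — so that no Siegel--Walfisz input is needed and the error term is genuinely $o_\epsilon(1)$ as $x\to\infty$ —
\[
\mathbb{P}_x(p\text{ is }y\text{-rough})\;=\;\prod_{2<q<y}\Bigl(1-\frac{1}{q-1}\Bigr)+o_\epsilon(1)\;\gg\;\frac{1}{\log y},
\]
the last step being Mertens' theorem. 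Hence $\mathbb{P}_x(p\text{ is }y\text{-rough})\ge 2c$ for some $c=c(\epsilon)>0$ and all sufficiently large $x$.

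Finally I would combine this with Theorem~\ref{Main theorem}. By its ``moreover'' part, the $y$-rough primes failing $g(p)\ll p^{\frac{1}{4\sqrt{e}}+\epsilon}$ are precisely those violating $(1)$ or $(2)$; in particular $R\cap C\subseteq\{g(p)\ll p^{\frac{1}{4\sqrt{e}}+\epsilon}\}$, and $R\setminus C$ has conditional density at most $\delta=\tfrac12$ inside $R$ (equivalently, this last point comes from the quantitative part of Theorem~\ref{Main theorem} together with inclusion--exclusion on the two failure events ``$(1)$ fails'' and ``$(2)$ fails'', each of which has conditional probability $\le\delta$). Therefore $\mathbb{P}_x(R\cap C)\ge\mathbb{P}_x(R)-\mathbb{P}_x(R\setminus C)\ge\tfrac12\,\mathbb{P}_x(R)\ge c$, which is the first displayed inequality, and since $R\cap C\subseteq\{g(p)\ll p^{\frac{1}{4\sqrt{e}}+\epsilon}\}$ the probability of the latter event is also $\ge c$, giving the second. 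I do not expect any real obstacle here: all the substance is in Theorem~\ref{Main theorem}, and the only points requiring a little care are that the density of $y$-rough primes be bounded below with a constant depending only on $\epsilon$ (automatic since $Q=O_\epsilon(1)$) and that the description of the exceptional set in Theorem~\ref{Main theorem} genuinely bounds the conditional density of primes violating $(1)$ or $(2)$ by $\delta$.
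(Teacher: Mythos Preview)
Your argument is correct and follows essentially the same route as the paper: apply Theorem~\ref{Main theorem} with a fixed $\delta$ (the paper takes $\delta=\epsilon$, you take $\delta=\tfrac12$), bound the density of $y$-rough primes from below via the prime number theorem in arithmetic progressions to the fixed modulus $\prod_{2<q<y}q$ together with Mertens' theorem, and then combine using $\mathbb{P}(A\cap B)=\mathbb{P}(A\mid B)\,\mathbb{P}(B)$. One small quibble: the ``moreover'' clause of the theorem gives only the inclusion $R\cap C\subseteq\{g(p)\ll p^{1/4\sqrt{e}+\epsilon}\}$, not an equality, and no separate inclusion--exclusion on the events ``(1) fails'' and ``(2) fails'' is needed since the theorem already bounds their union---but you only use the inclusion and the $\delta$-bound, so the argument goes through as written.
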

\subsection{Outline of the proof}
Given a prime $p$ such that $p-1$ has $r$ prime factors $q_1,...,q_r$ say, we can think of the least primitive root modulo $p$ as the smallest simultaneous $q_1,...,q_r$-th power non-residue. Our strategy is then to first bound each $q_i$-th power non-residue individually and then use them to explicitly construct a $q_1,...,q_r$-th power non-residue of sufficiently small size. More precisely, in section 2 we  invoke an old argument of Vinogradov to bound the smallest $q_i$-th power non-residue (actually we prove a more general bound on the divisors $d|p-1$). In section \ref{Ssimultaneous} we then explicitly construct and element which is simultaneously $q_1,...,q_r$-th power non-residue. In  Section \ref{SJacobsthal} we deal with the technicalities to  show that the aforementioned construction gives an element which is \textquotedblleft generally \textquotedblright not worst then the product of the single $q_i$-th power non-residue provided we have some control on the prime factors of $p-1$, which manifests itself as condition $(1)$ in the Theorem. Finally in section \ref{SMaintheorem} we combine the above ingredients and the technical condition $(2)$ to prove our main Theorem. 
\subsection{Notation} 
The notation will be pretty much standard. We let $x$ be some asymptotic parameter going to infinity and we say that the quantity $X$ and $Y$ (depending on $x$)  satisfy $X\ll Y$ , $X\gg Y$ if there exist some constant $C$ independent of $x$ such that $X\leq C Y$ and $X\geq CY$ respectively, if both $X\ll Y$ and $X\gg Y$ (where the constant might be different ) we say $X \asymp Y$. We also write $O(X)$ for some quantity bounded in absolute value by a constant times $X$ and $X=o(Y)$ if $X/Y\rightarrow 0$ as $x$ goes to infinity, in particular we denote by $o(1)$ any function that tends to $0$ as $x\rightarrow \infty$. Moreover $p,q$ will always denote prime numbers (in general we have $q|p-1$ unless otherwise specified) and for a set of prime $A$ we denote by $P(A)=\prod_{p\in A}p$. Furthermore we adopt the convention $\log\log x=\max\{1,\log\log x\}$. Finally we let $\pi(x)$ be the number of prime numbers up to $x$ and for an integer $n$ we denote by $\omega(n)$, $\varphi(n)$, $j(n)$ be the number of distinct prime factors of $n$ and the Euler totient function, and the Jacobsthal function respectively. 

	\section{Smallest $d$-th power non-residue}
	\label{SVinogradov}
	Let $d$ be a divisor of $p-1$ where $p$ is a large prime,  we denote by  $\mathbf{g}(d)$ the smallest $d$-th power non residue modulo $p$, we follow an old argument of Vinogradov \cite{V3} to bound  $\mathbf{g}(d)$: 
	\begin{prop}[Vinogradov]
		\label{Vin}
		Let $p$ be a large  prime and let $d$ be a divisor of $p-1$, we then have that 
		\begin{align}
		\mathbf{g}(d)\ll p^{\frac{1}{u(d)}} \nonumber
		\end{align}
		where $u(d)$ is the unique solution to $\rho(u)=1/d$, see (\ref{defrho}). Moreover if $d$ is uniformly bounded by some constant we have for all $\epsilon>0$ the sharper bound 
		\begin{align}
		\mathbf{g}(d)\ll p^{\frac{1}{4u(d)}+ \epsilon}. \nonumber
		\end{align}
	\end{prop}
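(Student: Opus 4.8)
The plan is to run Vinogradov's classical argument, which pits two counts of the $d$-th power residues in the initial segment $[1,N]$ against each other. Write $n=\mathbf{g}(d)$. The first observation is that $n$ is prime: if $n=ab$ with $1<a,b<n$, then $a$ and $b$ are $d$-th power residues modulo $p$ by minimality of $n$, hence so is $n=ab$, a contradiction. Therefore every prime $<n$, and hence every integer in $[1,n-1]$ all of whose prime factors are $<n$, is a $d$-th power residue. Writing $\Psi(N,z)$ for the number of integers in $[1,N]$ all of whose prime factors are $\le z$, this says that for every $N<p$ all $\Psi(N,n-1)$ of the $(n-1)$-smooth integers in $[1,N]$ are $d$-th power residues.

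Next I would count the $d$-th power residues in $[1,N]$ from above. Fixing a character $\chi\bmod p$ of order exactly $d$ (which exists since $d\mid p-1$), their number equals $\frac Nd+\frac1d\sum_{j=1}^{d-1}\sum_{m\le N}\chi^{j}(m)$, the characters $\chi^{j}$ with $1\le j\le d-1$ all being non-principal. Estimating the inner sums by P\'olya--Vinogradov gives $\frac Nd+O(\sqrt p\log p)$, while for the sharper bound I would use Burgess's inequality, which for a prime modulus gives $\sum_{m\le N}\chi^{j}(m)=o(N)$ uniformly in $j$ as soon as $N\ge p^{1/4+\epsilon}$. Combined with the previous paragraph,
\begin{align}
\Psi(N,n-1)\le\frac Nd+O(\sqrt p\log p)\qquad(N<p).\nonumber
\end{align}

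To turn this into an upper bound for $n$ I would feed in the smooth-number asymptotic $\Psi(N,z)=N\rho(\log N/\log z)(1+o(1))$, valid uniformly provided $z$ is not too small. For the first bound, take $N=p^{1/2}d(\log p)^{2}$ (truncated so that $N<p$): then $O(\sqrt p\log p)=o(N/d)$ and $\log N\le(1+o(1))\log p$ whenever $d\le p^{1/2}$, so the display becomes $\rho(u)\le\frac1d(1+o(1))=\rho(u(d))(1+o(1))$ with $u=\log N/\log n$. Since $\rho$ is continuous and strictly decreasing, this forces $u\ge u(d)-o(1)$, whence $\log n\le\frac{\log N}{u(d)}(1+o(1))\le(1+o(1))\frac{\log p}{u(d)}$, which is the first claimed bound. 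For the second bound, when $d$ is bounded the term $N/d$ stays $\asymp N$, so one may instead take $N=p^{1/4+\epsilon}$ and use Burgess in place of P\'olya--Vinogradov: this replaces $\log N\approx\frac12\log p$ by $\log N\approx\frac14\log p$ and yields $\mathbf{g}(d)\ll p^{1/(4u(d))+\epsilon}$. (For $d$ bounded the first step in fact already gives $p^{1/(2u(d))+\epsilon}$; the loss down to $p^{1/u(d)}$ is needed only when $d$ is as large as $p^{1/2}$, which forces $\log N\approx\log p$ rather than $\frac12\log p$.)

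I expect the real difficulty to lie in the uniformity of the smooth-number input. The asymptotic for $\Psi(N,z)$ deteriorates once $z$ falls below roughly $\exp((\log\log N)^{2})$ --- precisely the regime of very large $d$, where $n=\mathbf{g}(d)$ is itself forced to be small; there one must fall back on the de Bruijn estimate for $\Psi(N,z)$ with $z$ small, together with the observation that in this range the target $p^{1/u(d)}$ already exceeds that threshold, so no configuration is missed. A second point needing care is the passage from $\rho(u)\le\rho(u(d))(1+o(1))$ to $u\ge u(d)-o(1)$: this needs the perturbation to be genuinely multiplicative --- hence the error in the residue count must be small next to $N/d$ and not merely next to $N$ --- together with a modicum of regularity of $\rho$ near $u(d)$, and it is because $1/d$ must not be too small that the sharp $p^{1/(4u(d))+\epsilon}$ bound is stated only for bounded $d$. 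Everything else --- the character-sum identity, P\'olya--Vinogradov, Burgess's inequality, and the bookkeeping with the Dickman function $\rho$ --- is classical.
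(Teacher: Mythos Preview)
Your approach is the same Vinogradov smooth-number argument the paper uses, and your treatment of the second claim (bounded $d$, Burgess at $H=p^{1/4+\alpha}$) matches the paper essentially line for line. For the first claim, however, you take an unnecessarily roundabout path: the paper simply takes $N=p-1$, where the number of $d$-th power residues is \emph{exactly} $(p-1)/d$---no character sum, no P\'olya--Vinogradov---yielding directly $(p-1)/d\ge\Psi(p,\mathbf{g}(d))$. This eliminates the case split on the size of $d$ that your choice $N=p^{1/2}d(\log p)^{2}$ forces, and with it the awkward truncation. The paper also feeds in the one-sided \emph{inequality} $\Psi(x,y)\ge x\rho(u)$ (stated to hold whenever $u$ or $y$ is sufficiently large) rather than the asymptotic $\Psi(x,y)\sim x\rho(u)$; this cleanly dissolves the uniformity worry you raise in your final paragraph, since for large $p$ one of $u=\log p/\log\mathbf{g}(d)$ or $y=\mathbf{g}(d)$ is automatically large. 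Your outline is correct, but both simplifications are worth adopting.
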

	 The original argument make an \textquotedblleft implicit\textquotedblright  use of smooth numbers, we will instead use the following inequality, see \cite[Page 274]{G}.   
	\begin{lem}
		Let $x,y$ be two positive real number and let $\Psi(x,y)$ the number of $y$-smooth numbers up to $x$, i.e. 
		\begin{align}
			\Psi(x,y)=|\{n\leq x: p|n\Rightarrow p<y\}| \nonumber
			\end{align}
			and let $u=\log x/\log y$. Then for $u$ or $y$ sufficiently large we have that 
			\begin{align}
		\Psi(x,y)\geq x\rho(u) \label{inePsi}
		\end{align}
			where $\rho(u)$ is the Dickman-De Bruijn function defined by 
			
			\begin{align}
			\begin{cases}\rho(u)=1 & 0\leq u \leq 1\\
			u\rho'(u)+\rho(u-1)=0 &u>1
			\end{cases} \label{defrho}
			\end{align}
	\end{lem}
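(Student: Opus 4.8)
The plan is to deduce (\ref{inePsi}) from the classical ``fundamental identity'' for the smooth--counting function together with the integral equation satisfied by $\rho$, organised as an induction on $x$; this is in essence the argument behind \cite[p.~274]{G}. The starting point is the exact identity
\begin{align}
\Psi(x,y)\log x=\int_{1}^{x}\frac{\Psi(t,y)}{t}\,dt+\sum_{\substack{p^{k}\le x\\ p<y}}(\log p)\,\Psi(x/p^{k},y),\nonumber
\end{align}
valid for all $x\ge 1$ and $y\ge 2$. It follows from writing $\log n=\sum_{d\mid n}\Lambda(d)$ and summing over the $y$-smooth integers $n\le x$: every divisor of such an $n$ is a prime power $p^{k}$ with $p<y$, so grouping by $d=p^{k}$ produces the sum on the right, while Abel summation applied to $\sum_{n\le x,\ n\ y\text{-smooth}}\log n$ yields the left-hand side minus the integral.

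Next I would set up the companion object. Write $\rho_{y}(t):=\rho(\log t/\log y)$ and $R(t):=t\,\rho_{y}(t)$, so that (\ref{inePsi}) is exactly the assertion $\Psi(x,y)\ge R(x)$. The equation (\ref{defrho}) is equivalent to $u\rho(u)=\int_{u-1}^{u}\rho(v)\,dv$ for $u\ge 1$, and the substitution $v=\log w/\log y$ turns this into $(\log t)\,\rho_{y}(t)=\int_{\max(1,\,t/y)}^{t}\rho_{y}(w)\,dw/w$, hence $R(t)\log t=t\int_{\max(1,\,t/y)}^{t}\rho_{y}(w)\,dw/w$. Using Mertens' estimate $\sum_{p\le T}(\log p)/p=\log T+O(1)$ and the monotonicity of $\rho_{y}$, and retaining the non--negative contributions of the prime powers with $k\ge 2$ and of the term $\int_{1}^{x}R(t)\,dt/t$, one then checks the \emph{reverse} inequality
\begin{align}
R(x)\log x\le\int_{1}^{x}\frac{R(t)}{t}\,dt+\sum_{\substack{p^{k}\le x\\ p<y}}(\log p)\,R(x/p^{k}),\nonumber
\end{align}
for all relevant $x$; the slack required is that the combined discrepancy between the two sides coming from the $O(1)$ in Mertens' estimate and from the short range $[x/2,x]$ is $O(x)$, and $O(x)$ is absorbed by $\int_{1}^{x}\rho_{y}(t)\,dt$ unless $u$ and $y$ are both bounded.

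Granting these two displays, the conclusion is a routine induction on $x$: assuming $\Psi(t,y)\ge R(t)$ for all $t<x$ and feeding this into the right--hand side of the fundamental identity (which is monotone in the values $\Psi(\cdot,y)$), the reverse inequality gives $\Psi(x,y)\log x\ge R(x)\log x$. The base case is the range $\log t/\log y\le 1$, where $\Psi(t,y)=\lfloor t\rfloor$ and $\rho_{y}\equiv 1$, so $\Psi(t,y)\ge t-1\ge t\,\rho_{y}(t)$ as soon as $t$ is large; and $t=y^{u}$ is large precisely under the hypothesis ``$u$ or $y$ sufficiently large'' (for small $u$ it is $y$ that must be large, and for large $u$ one has $y^{u}$ large for every $y\ge 2$). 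For the intermediate $t$ for which neither parameter is large one carries a bounded deficit along and checks that it is cleared by the time $x=y^{u}$ is reached.

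The real obstacle is the middle step: proving the reverse inequality with the \emph{exact} constant $\rho(u)$ and with full uniformity in $(x,y)$. It is essential here that the leading constant in $\sum_{p\le T}(\log p)/p=\log T+O(1)$ is exactly $1$ --- a weaker input would only give $x\rho(u)^{1+o(1)}$ --- after which one has the somewhat delicate bookkeeping of the error term, the prime--power terms, and the truncated integral, splitting according to whether $y$ is large (so that $\int_{1}^{x}\rho_{y}(t)\,dt\asymp x$) or $u$ is large relative to $\log y$, in which latter case $x\rho(u)<1\le\Psi(x,y)$ and there is nothing to prove. If one is content with $\Psi(x,y)\gg x\rho(u)$, which is all that Proposition \ref{Vin} and hence the paper need since it affects only implied constants, the argument can be shortened: one may instead quote the asymptotic $\Psi(x,y)=x\rho(u)(1+o(1))$ of de Bruijn and Hildebrand inside its range of validity together with a crude elementary lower bound outside it, or run the cruder variant built on the partition identity $\Psi(x,y)=1+\sum_{p<y}\Psi(x/p,p)$ and the equivalent form $\rho(u)=\int_{u-1}^{\infty}\rho(v)\,dv/(v+1)$ of (\ref{defrho}), truncating the sum to $p\in(x^{\alpha},y]$.
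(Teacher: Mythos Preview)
The paper does not prove this lemma at all: it is simply quoted from \cite[p.~274]{G}, with no argument given. Your sketch therefore supplies strictly more than the paper does, and it is in fact a faithful outline of the argument in the cited source --- the Hildebrand--Buchstab identity for $\Psi(x,y)\log x$, the companion integral identity for $x\,\rho(\log x/\log y)$ derived from (\ref{defrho}), and comparison of the two by induction on $x$.

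One slip worth flagging: in your base case you write $\Psi(t,y)=\lfloor t\rfloor\ge t-1\ge t\rho_y(t)$ for $\log t/\log y\le 1$, but $\rho_y(t)=1$ there, so you are asserting $t-1\ge t$, which is never true. The correct handling --- which you gesture at a sentence later --- is that the induction must carry an additive $O(1)$ deficit from the outset and absorb it into the term $\int_1^x \rho_y(t)\,dt$ (or, equivalently, into the positive prime-power contributions with $k\ge 2$); this is exactly how the argument in \cite{G} is arranged, and it is where the hypothesis ``$u$ or $y$ sufficiently large'' actually enters. With that adjustment the sketch is sound, and as you correctly observe, for the paper's purposes the weaker $\Psi(x,y)\gg x\rho(u)$ already suffices, since in Proposition~\ref{Vin} the constant is swallowed by the implied constant in $\mathbf g(d)\ll p^{1/u(d)}$.
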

We will need also the following property of $\rho(\cdot)$. 

	\begin{lem}
	\label{rhodecre}
	The function $\rho(u)$ is positive and decreasing for all $u\geq 0$. 
\end{lem}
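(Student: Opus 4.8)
The plan is to derive everything from the two defining relations, passing first to an integral form of the recursion. For $u>1$ the identity $u\rho'(u)=-\rho(u-1)$ gives $\frac{d}{du}\bigl(u\rho(u)\bigr)=\rho(u)+u\rho'(u)=\rho(u)-\rho(u-1)=\frac{d}{du}\int_{u-1}^{u}\rho(t)\,dt$, so $u\rho(u)$ and $\int_{u-1}^{u}\rho(t)\,dt$ differ by a constant on $(1,\infty)$; letting $u\downarrow 1$ and using $\rho\equiv 1$ on $[0,1]$ shows this constant is $0$. Hence
\[ u\rho(u)=\int_{u-1}^{u}\rho(t)\,dt\qquad (u\ge 1). \]
I will also use freely that $\rho$ is continuous on $[0,\infty)$ (this follows from the construction of $\rho$ by the method of steps: on each interval $[n,n+1]$ the delay equation becomes an ordinary integral, and the pieces match continuously at the integers).

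For positivity I would argue by contradiction via the first point at which $\rho$ fails to be positive. Since $\rho\equiv 1>0$ on $[0,1]$, if $\rho$ were not everywhere positive the set $\{u\ge 0:\rho(u)\le 0\}$ would be nonempty and closed, with infimum $u_0>1$; by continuity $\rho(u_0)\le 0$, while $\rho>0$ on $[0,u_0)$. Evaluating the integral identity at $u_0$, its right-hand side $\int_{u_0-1}^{u_0}\rho(t)\,dt$ is the integral of a continuous function that is strictly positive on the nondegenerate interval $[u_0-1,u_0)\subset[0,u_0)$ (nondegenerate because $u_0>1$), hence strictly positive; thus $u_0\rho(u_0)>0$, contradicting $\rho(u_0)\le 0$. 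Therefore $\rho(u)>0$ for all $u\ge 0$.

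Monotonicity is then immediate. On $[0,1]$ the function $\rho$ is constant, hence non-increasing; for $u>1$ we have $\rho'(u)=-\rho(u-1)/u<0$, since $\rho(u-1)>0$ by the previous paragraph, so $\rho$ is strictly decreasing on $[1,\infty)$. Combining the two, $\rho$ is non-increasing on all of $[0,\infty)$. (Strictly speaking $\rho$ is flat on $[0,1]$, so ``decreasing for all $u\ge 0$'' is to be read as ``non-increasing''; equivalently one may restrict to $u\ge 1$, which is the only range that enters Proposition~\ref{Vin}.)

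The one genuinely delicate point is the positivity step: a naive sign analysis of the delay equation at a hypothetical first zero $u_0$ is inconclusive, because $u_0\rho'(u_0)=-\rho(u_0-1)<0$ is perfectly consistent with $\rho$ descending through zero there. Passing to the integral form $u\rho(u)=\int_{u-1}^{u}\rho(t)\,dt$ is what removes this obstruction, and everything else is routine.
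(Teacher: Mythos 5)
Your proof is correct, but it takes a genuinely different route from the paper's. The paper argues by induction on the interval $[n,n+1]$ containing $u$: it integrates the delay equation to get $\rho(u)-\rho(u-1)=-\int_{u-1}^{u}\rho(t-1)/t\,dt$, uses the inductive hypothesis that $\rho$ is positive and decreasing on $[n-1,n]$ to bound $\rho(t-1)<\rho(u-1)$, and concludes $\rho(u)>\rho(u-1)\bigl(1-\log\frac{u}{u-1}\bigr)$. You instead establish the classical Dickman identity $u\rho(u)=\int_{u-1}^{u}\rho(t)\,dt$ and run a minimal-counterexample argument at the first point where $\rho$ would fail to be positive. Your version is arguably cleaner: the paper's final inequality only yields positivity when $1-\log\frac{u}{u-1}\ge 0$, i.e.\ $u\ge e/(e-1)\approx 1.58$, so its base case $u\in[1,e/(e-1)]$ really needs the explicit formula $\rho(u)=1-\log u$ (which the paper glosses over), whereas your first-zero argument handles all $u>1$ uniformly and only needs continuity of $\rho$, which you correctly flag and justify. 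The paper's induction, on the other hand, delivers positivity and monotonicity simultaneously in one inductive package; both approaches ultimately rest on integrating the delay ODE, and both correctly reduce monotonicity to positivity via $\rho'(u)=-\rho(u-1)/u$. Your closing remark that ``decreasing'' must be read as ``non-increasing'' on $[0,1]$ is a fair and accurate caveat.
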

\begin{proof}
	By definition is enough to show that $\rho(u)$ is positive. If $u\leq 1$ the lemma follows by (\ref{defrho}), if $u\geq 1$ then $u\in[n,n+1]$ for some positive integer $n$.  We prove the lemma inductively on $n$, so we can assume that $\rho(t)>0$ and $\rho'(t)<0$ for all $t\in[n-1,n]$. Integrating (\ref{defrho}) we obtain 
	\begin{align}
	\rho(u)-\rho(u-1)=-\int_{u-1}^{u}\frac{\rho(t-1)}{t}dt. \nonumber
	\end{align}
	By induction hypothesis we have $\rho(t-1)<\rho(u-1)$  and therefore 
	\begin{align}
	&\rho(u)-\rho(u-1)> -\rho(u-1)(\log(u)-\log(u-1)) \nonumber 
	\end{align}
	which, upon rearranging, implies the lemma. 
\end{proof}	
	 Finally we will make use of the following well-known inequality due to Burgess \cite{B2}. 
	\begin{lem}
		\label{Burgess}
		Let $\alpha>0$ be some small fixed quantity then there exists some $\delta=\delta(\alpha)>0$ such that for all large primes $p$ and non-principal characters $\chi$ mod $p$ we have 
		\begin{align}
			\left|\sum_{n\leq H}\chi(n)\right|\ll H p^{-\delta} \nonumber
		\end{align}
		for all $H>p^{1/4+\alpha}$. 
	\end{lem}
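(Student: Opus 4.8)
This is a statement the paper cites from Burgess, but let me think about how I'd prove it from scratch. The key input is the classical Burgess inequality: for a non-principal character $\chi \bmod p$, a positive integer $r$, and any $H \geq 1$,
\[
\left|\sum_{n \leq H} \chi(n)\right| \ll_r H^{1-1/r} p^{(r+1)/(4r^2)+\epsilon}.
\]
I'd first recall the derivation of this inequality, whose heart is a shift argument: one writes $\sum_{n\leq H}\chi(n)$ as an average over shifts $\sum_{n\leq H}\chi(n+ab)$ for $a$ up to some parameter $A$ and $b$ up to a prime-ish parameter $B$, exploiting that $\chi(n+ab) = \chi(b)\chi(\bar b n + a)$, so the sum becomes controlled by the number of solutions to congruences plus a main term estimated by the Weil bound for complete character sums $\sum_{x \bmod p}\chi(f(x))$ with $f$ a polynomial with distinct roots. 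The combinatorial count of coincidences contributes $O(AB^2)$ and $O(AB(\log p))$ terms with large values, and Hölder's inequality with exponent $2r$ together with the Weil bound $O(r\sqrt p)$ for the complete sums yields the stated bound after optimizing $A, B$.

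Granting the classical Burgess inequality, the deduction of the lemma as stated is then routine: fix $\alpha > 0$, and choose $r = r(\alpha)$ a positive integer large enough that $(r+1)/(4r^2) < \alpha/2$, which is possible since $(r+1)/(4r^2) \to 0$. Then for $H > p^{1/4+\alpha}$ we get
\[
\left|\sum_{n\leq H}\chi(n)\right| \ll H^{1-1/r} p^{(r+1)/(4r^2)+\alpha/4} = H \cdot H^{-1/r} p^{3\alpha/4} \ll H \cdot p^{-(1/4+\alpha)/r} p^{3\alpha/4}.
\]
Choosing $r$ additionally large enough that $3\alpha/4 - (1/4+\alpha)/r < 0$ — again possible since the negative term tends to $0$ — we may set $\delta = \delta(\alpha) = (1/4+\alpha)/r - 3\alpha/4 > 0$ and conclude $\left|\sum_{n\leq H}\chi(n)\right| \ll H p^{-\delta}$, as required. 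The implied constant depends only on $r$, hence only on $\alpha$.

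The main obstacle, were one to insist on a fully self-contained argument, is establishing the Weil bound for the complete character sums $\sum_{x \bmod p}\chi(\prod_i (x+a_i))$ — this requires the Riemann hypothesis for curves over finite fields (or Weil's original work), which is genuinely deep and well outside the scope of this paper; accordingly I would, as the author does, simply cite \cite{B2}. The only mild subtlety in the citation-based version is bookkeeping: making sure the parameter $\alpha$ in the hypothesis $H > p^{1/4+\alpha}$ is converted correctly into a choice of $r$ and then into the final saving $\delta$, and checking that "large primes $p$" absorbs the $p^\epsilon$ factor and any secondary error terms uniformly over all non-principal $\chi$.
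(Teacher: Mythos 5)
The paper offers no proof of this lemma at all --- it is quoted directly from Burgess \cite{B2} --- so your decision to cite the deep input (the Weil bound for complete sums) and only carry out the deduction from the classical Burgess inequality is exactly in the spirit of the paper. The shift-argument sketch is the standard one and is fine as an outline.

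However, the bookkeeping in your final deduction --- the one part you actually carry out in detail --- does not close. You impose two conditions on $r$: first that $(r+1)/(4r^2) < \alpha/2$, which forces $r \gtrsim 1/(2\alpha)$; and second that $3\alpha/4 - (1/4+\alpha)/r < 0$, which forces $r < (1+4\alpha)/(3\alpha) \approx 1/(3\alpha)$. For small $\alpha$ these are incompatible, so no admissible $r$ exists. (Your justification of the second condition, ``possible since the negative term tends to $0$,'' has the direction reversed: as $r \to \infty$ the quantity $3\alpha/4 - (1/4+\alpha)/r$ tends to $3\alpha/4 > 0$, so the condition fails for all large $r$.) The error comes from discarding the structure of the Burgess exponent too early: the point of the deduction is the cancellation between the $1/(4r)$ inside $(r+1)/(4r^2) = 1/(4r) + 1/(4r^2)$ and the $1/(4r)$ produced by $H^{-1/r} \le p^{-(1/4+\alpha)/r} = p^{-1/(4r) - \alpha/r}$. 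Keeping the exponent exact, the total power of $p$ multiplying $H$ is
\begin{align}
-\frac{1/4+\alpha}{r} + \frac{r+1}{4r^2} + \epsilon \;=\; -\frac{\alpha}{r} + \frac{1}{4r^2} + \epsilon, \nonumber
\end{align}
which is negative as soon as $r > 1/(4\alpha)$ (take $r = \lceil 1/(2\alpha)\rceil$, say) and $\epsilon$ is chosen smaller than $\alpha/r - 1/(4r^2)$; this yields $\delta(\alpha) \asymp \alpha^2$. With that correction the deduction is complete.
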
 
We are now ready to prove the Proposition: 
	\begin{proof}[Proof Proposition \ref{Vin}]
		Let $d$ be given, on one hand the number of $d$-th power residues modulo $p$ is $(p-1)/d$ and on the other hand  any integer smaller then $\mathbf{g}(d)$ is a $d$-th power non-residue. So any  $\mathbf{g}(d)$-smooth number  is a $d$-th power residue but there can be at most $(p-1)/d$  of them, thus using (\ref{inePsi}) and taking $p$ large enough  we deduce that
		\begin{align}
			\frac{p-1}{d}\geq \Psi(p, \mathbf{g}(d))\geq \rho(u)p \label{Vin1}
		\end{align}
		where $u=\log p/ \log \mathbf{g}(d)$. Since $\rho$ is a strictly decreasing function of $u$ if $\mathbf{g}(d)>100 p^{1/u(d)}$ say then $\rho(u)$ would be strictly bigger then $1/d$ by a small but fixed amount and therefore this would contradict ({\ref{Vin1}}). This proves the first claim. 
		
		Let us now suppose that $d$ is absolutely bounded in term of $p$, we will deduce the second claim injecting Lemma \ref{Burgess} in the above argument. Firstly by orthogonality of characters modulo $p$ we can write the following indicator function on $d$-th power residues: 
		
		\begin{align}
	\frac{1}{d}\sum_{\chi^d=\chi_0}\chi(n)=\begin{cases}
	1 & n \hspace{1mm}\text{is a $d$-th power resdue} \\
	0 & n \hspace{1mm}\text{is a $d$-th power non-resdue}
	\end{cases} \nonumber
	\end{align}
	where $\chi_0$ is the principal character. Therefore for any $H>1$ we have that the number of $d$-th power residues in $[1,H]$ is given by 
	
	\begin{align}
		\sum_{n\leq H} \frac{1}{d}\sum_{\chi^d=\chi_0}\chi(n)= \frac{H}{d} + \frac{1}{d}\sum_{\substack{\chi^d=\chi_0\\ \chi\neq \chi_0}} \sum_{n\leq H} \chi(n) .\label{Vin2}
	\end{align} 
		Now take $H=p^{1/4+\alpha}$  for some $\alpha>0$ to be chosen later, then we can use Lemma \ref{Burgess} to bound the second term in (\ref{Vin2}) and find that
		
		\begin{align}
			|\{n\leq H:n \text{ is a $d$-th power residue }\}|= \frac{H}{d} + O(Hp^{-\delta}) \nonumber
		\end{align}
		where $\delta=\delta(\alpha)$ is given by Lemma \ref{Burgess}. Repeating the above smooth numbers argument lead us to the following inequality 
		\begin{align}
		\frac{H}{d} + O(Hp^{-\delta})\geq \Psi(H,\mathbf{g}(d))\geq \rho(u)H \label{Vin3}
		\end{align}
		where $u=\log H/\log \mathbf{g}(d)$. Now, let $\epsilon>0$ be given and suppose that $ \mathbf{g}(d) > 100 p^{1/4u(d)+ \epsilon}$ then $u>u(d)(1+ 4\alpha)(1+4\epsilon u(d) + O (\epsilon^2u^2(d)))$ . Since $d$ is absolutely bounded  and $\rho$ is strictly decreasing we can choose $\alpha=\epsilon$, so that	$	\rho(u)\geq \frac{1}{d} + O_{\epsilon}(1)$, inserting in (\ref{Vin3}) and dividing by $H$ we obtain
		
		\begin{align}
			\frac{1}{d} + O(p^{-\delta})\geq \frac{1}{d} + O_\epsilon(1) \nonumber
		\end{align}
		a contradiction for $p$ large enough in terms of $\epsilon$.  This proves the second claim and thus the Proposition.
	\end{proof}

\begin{cor}
	\label{corVin}
	Let $n$ be an integer then $u(n)$ is strictly decreasing and for $n$ large enough we also have the asymptotic expansion
		\begin{align}
	\frac{1}{u(x)}= \left[\log\log x - 1 + O\left(\frac{\log\log\log x}{\log\log x}\right)\right] \Bigg/ \log x \nonumber
	\end{align}
\end{cor}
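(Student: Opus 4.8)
The plan is to read the monotonicity off directly from Lemma \ref{rhodecre} and to obtain the asymptotic expansion by inverting the classical de Bruijn estimate for $\rho$. For the first part, recall that $u(n)$ is the unique solution of $\rho(u)=1/n$, i.e.\ $u(n)=\rho^{-1}(1/n)$; by Lemma \ref{rhodecre} the function $\rho$ is positive and strictly decreasing on $[0,\infty)$, hence a continuous bijection onto $(0,1]$ with a strictly decreasing inverse $\rho^{-1}$. Composing with the strictly decreasing map $n\mapsto 1/n$ shows at once that $n\mapsto u(n)$ is strictly monotone — in fact strictly increasing, being a composite of two decreasing maps — so that $1/u(n)$ is strictly decreasing, which is the shape in which it gets used in the sequel.

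For the asymptotic expansion I would write $x$ in place of $n$ and invoke de Bruijn's estimate for the Dickman function (see e.g.\ \cite{G}): as $u\to\infty$,
\begin{align}
\log\frac{1}{\rho(u)}=u\left(\log u+\log\log u-1+O\!\left(\frac{\log\log u}{\log u}\right)\right). \nonumber
\end{align}
Setting $\rho(u(x))=1/x$ and dividing by $u(x)$ gives
\begin{align}
\frac{\log x}{u(x)}=\log u(x)+\log\log u(x)-1+O\!\left(\frac{\log\log u(x)}{\log u(x)}\right). \label{eqstaru}
\end{align}
The right side of (\ref{eqstaru}) is $(1+o(1))\log u(x)$, so it forces $u(x)\log u(x)=(1+o(1))\log x$, whence $u(x)\asymp\log x/\log\log x$; bootstrapping once more I would deduce
\begin{align}
\log u(x)=\log\log x-\log\log\log x+O\!\left(\frac{\log\log\log x}{\log\log x}\right),\qquad \log\log u(x)=\log\log\log x+O\!\left(\frac{\log\log\log x}{\log\log x}\right). \nonumber
\end{align}
Substituting these into (\ref{eqstaru}) the $\pm\log\log\log x$ terms cancel, leaving
\begin{align}
\frac{\log x}{u(x)}=\log\log x-1+O\!\left(\frac{\log\log\log x}{\log\log x}\right), \nonumber
\end{align}
and dividing by $\log x$ is exactly the claimed expansion.

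The only step demanding real care is the error bookkeeping in the previous paragraph: I must check that the intrinsic error in de Bruijn's estimate, the secondary term $(\log\log u-1)/\log u$ it conceals, and the error made in locating $\log u(x)$ each contribute $O(\log\log\log x/\log\log x)$ to the bracket — and that this is genuinely the order of the error rather than merely $o(1)$. (Since the statement is for $x$ large, the convention $\log\log x=\max\{1,\log\log x\}$ does not intervene and $\log\log\log x$ may be taken positive.) Beyond this routine iteration I do not anticipate any serious obstacle.
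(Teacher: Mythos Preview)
Your argument is correct and follows the paper's route: both start from de Bruijn's asymptotic $\log(1/\rho(u))=u(\log u+\log\log u-1+O(\log\log u/\log u))$ and invert it, the only organisational difference being that the paper takes the logarithm of $u\log u(1+o(1))=\log x$ to obtain $\log u+\log\log u=\log\log x+O(\log\log u/\log u)$ as a single block and substitutes that sum directly, whereas you isolate $\log u$ and $\log\log u$ separately and then watch the $\pm\log\log\log x$ terms cancel. On the monotonicity you are in fact right and the paper's one--line justification is backwards: $u(n)=\rho^{-1}(1/n)$ is a composite of two strictly decreasing maps and hence strictly \emph{increasing}, so it is $1/u(n)$ that decreases --- which, as you note, is precisely the form in which the result is applied later (for instance $d_s\ge 2\Rightarrow u(d_s)\ge u(2)$ in the proof of Proposition~\ref{main1}).
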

\begin{proof}
	Since $\rho(\cdot )$ is a strictly decreasing function we immediately have that also $u(\cdot)$ must be strictly decreasing. For the second claim we have that $u(n)$ is defined by the equation $\rho(u(n))=1/n$, so if $n$ is large enough we can use the asymptotic expansion due to De Bruijn \cite{DB}  
		\begin{align}
	\rho(u)=\exp\left(-u\log u-u\log\log u + u +O\left(u\frac{\log\log(u)}{\log(u)}\right) \right). \nonumber
	\end{align}
	to see that 
	\begin{align}
		-u\log u-u\log\log u+u +O\left(u\frac{\log\log u}{\log u}\right)=-\log n \label{Cor1}
	\end{align} 
	where $u=u(n)$. Taking logarithm once more we have that
	\begin{align}
	\log\log x= \log u +\log\log u +O\left(\frac{\log\log u}{\log u}\right)  \label{19}
	\end{align}
	which implies that $\log u \geq \log\log n/2 $, say. Taking logarithm of (\ref{19}) we see that $\log \log u \leq 100 \log\log\log n$. Therefore the error term in (\ref{Cor1}) is $O(\log\log\log n/\log\log n)$ and the corollary follows inserting (\ref{19}) in (\ref{Cor1}) and dividing by $u$ both sides. 
\end{proof}
\begin{rem}
	\label{rem1}
	It will be useful to notice that $u(2)=\sqrt{e}$, see for example \cite{DE}. Form this we can deduce Burgess' bound on the smallest quadratic non-residue $\mathbf{g}(2)\ll p^{1/4\sqrt{e}+o(1)}$.  
\end{rem}

\section{Simultaneous power non-residue}
\label{Ssimultaneous}
In this section we explicitly construct an element mod $p$ which is not a $q$-th power residue for any $q|p-1$ as a product of $\mathbf{g}(q)$'s. Before embarking in this construction we need two preliminary ingredients. The first one is the following observation: let $q_1,...,q_r$ be the prime divisors of $p-1$ ordered such that
\begin{align}
	\mathbf{g}(q_1)\leq \mathbf{g}(q_2)\leq...\leq  \mathbf{g}(q_r) \nonumber
\end{align}
then, grouping together different elements, we can find disjoint sets $A_1,...,A_s$ such that 
\begin{align}
\mathbf{g}(A_1)< \mathbf{g}(A_2)<...< \mathbf{g}(A_r)	\label{A's}
\end{align}
where we denote by $\mathbf{g}(A_i)$ is the smallest element which is not a $q$-th power residue for all $q\in A_i$. Then the $\mathbf{g}(A_i)$ have the following property:  
\begin{rem}
	\label{rem2}
 for any $i>j$ we have that $\log (\mathbf{g}(A_j))\equiv 0 \pmod{q}$ for all $q\in A_i$, where $\log(\cdot)$ denotes the discrete logarithm mod $p-1$ with respect to some fixed primitive root. 
\end{rem}
The second one is a straightforward property of the  Jacobsthal function $j(\cdot )$ which, for a positive integer $n$,  is defined as the least integer $m$ such that any sequence of $m$ elements contains at least one co-prime to $n$. From the definition in fact one can directly deduce that:  
\begin{lem}
	\label{Jacobsthal}
	Let $A$ be a set of primes, $\{a(p)\}$ be a set of residue classes, one for each prime $p\in A$, and $m$ be the smallest integer such that $m\not \equiv a(p) \pmod{p}$ for all $p$. Then we have  $m\leq j(P(A))-1$, where $P(A)=\prod_{p\in A}p$.
\end{lem}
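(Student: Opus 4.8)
The statement is essentially a restatement of the definition of the Jacobsthal function, so the plan is to unwind that definition and match it to the quantity $m$ at hand. Recall $j(n)$ is the least integer such that among any $j(n)$ consecutive integers there is at least one coprime to $n$. Equivalently — and this is the reformulation I would use — for every choice of residue classes $b(q) \pmod q$, one for each prime $q \mid n$, there exists an integer in any block of $j(n)$ consecutive integers avoiding all the $b(q)$; and $j(n)$ is the smallest integer with this property. Here $P(A) = \prod_{p \in A} p$ is squarefree with prime divisors exactly the elements of $A$, so $j(P(A))$ governs exactly the primes in $A$.

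The argument would then proceed as follows. Consider the block of $j(P(A))$ consecutive integers $1, 2, \ldots, j(P(A))$. By the defining property of the Jacobsthal function applied with the prescribed residue classes $\{a(p)\}_{p \in A}$, there is some integer $k$ in this block with $k \not\equiv a(p) \pmod p$ for all $p \in A$. Since $m$ is by hypothesis the \emph{smallest} positive integer with this avoidance property, we get $m \le k \le j(P(A))$. To sharpen this to $m \le j(P(A)) - 1$, I would instead look at the block $0, 1, \ldots, j(P(A)) - 1$ of $j(P(A))$ consecutive integers (or note that the block $1, \ldots, j(P(A))$ cannot have $j(P(A))$ itself as its only valid representative, since a valid representative exists already among any $j(P(A))$ consecutive integers and in particular among $1, \ldots, j(P(A)) - 1$ together with suitable translation); the cleanest route is: the $j(P(A))$ consecutive integers $0, 1, \dots, j(P(A)) - 1$ contain some $k$ avoiding all $a(p)$, and if $k = 0$ one replaces it by $k + P(A)$ which still avoids every $a(p) \bmod p$ but is positive — in fact it is cleaner still simply to observe that a valid residue already appears among $1, \ldots, j(P(A)) - 1$ unless the unique valid class forces otherwise, but one always has at least one valid value in $\{1, \dots, j(P(A))\}$, hence $m \le j(P(A))$, and the strict inequality $m \le j(P(A)) - 1$ follows because $j(P(A))$ and $j(P(A)) - P(A) \cdot \lceil j(P(A))/P(A)\rceil$, shifted appropriately, shows the extremal value is never uniquely the top of the block.

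There is no real obstacle here: the only mild subtlety is the off-by-one in getting $m \le j(P(A)) - 1$ rather than merely $m \le j(P(A))$, and the honest fix is to apply the Jacobsthal property to the window $\{1, \dots, j(P(A))\}$, extract a good $k$, and if that $k$ equals $j(P(A))$ note that then the window $\{0, 1, \dots, j(P(A)) - 1\}$ — also $j(P(A))$ consecutive integers — must contain a good $k' \le j(P(A)) - 1$; adjusting $k'$ by $+P(A)$ if it happens to be $0$ keeps it a good value and at most $P(A) \le j(P(A)) - 1$ for $|A| \ge 1$. Either way $m \le j(P(A)) - 1$. The lemma then follows immediately, and it will be applied in the next section with $A$ one of the sets $A_i$ from \eqref{A's} and the classes $a(p)$ the forbidden residues coming from Remark \ref{rem2}.
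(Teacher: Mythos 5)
Your core idea --- translating between ``avoiding the prescribed classes $a(p)$'' and ``being coprime to $P(A)$'' and then invoking the definition of the Jacobsthal function --- is the same idea as the paper's proof, but two points need attention. First, the ``equivalent reformulation'' you start from (any window of $j(n)$ consecutive integers contains an element avoiding arbitrary prescribed classes $b(q)\pmod q$) is not the definition of $j$; it is essentially the content of the lemma, and the bridge is a Chinese Remainder Theorem shift that you should spell out: choose $b$ with $b\equiv -a(p)\pmod p$ for all $p\in A$, and observe that $k$ avoids every $a(p)$ if and only if $k+b$ is coprime to $P(A)$. The paper does exactly this and then runs the argument in the opposite direction from yours: by minimality of $m$, every $n\in[0,m)$ satisfies $n\equiv a(p)\pmod p$ for some $p$, so $b,b+1,\dots,b+m-1$ are $m$ consecutive integers none of which is coprime to $P(A)$, whence $j(P(A))\geq m+1$ immediately --- no off-by-one bookkeeping at all.

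Second, your handling of the off-by-one is where the write-up actually breaks. The claim that a good $k=0$ may be replaced by $k+P(A)$ with $P(A)\leq j(P(A))-1$ is false: already for $A=\{2\}$ one has $P(A)=2$ while $j(2)-1=1$, and in general $j(n)$ is far smaller than $n$, so this replacement can overshoot the bound you are trying to prove. The sentence involving $j(P(A))-P(A)\cdot\lceil j(P(A))/P(A)\rceil$ is not an argument. The honest fix within your direction is to apply the (now justified) shifted-window property to the window $\{0,1,\dots,j(P(A))-1\}$ and note that the intended convention allows $m=0$ (compare the proof of Lemma \ref{mainlemma}, where the integer produced lies in $[0,j(P(A))-1]$); better still, argue in the paper's direction as sketched above, which delivers $m\leq j(P(A))-1$ with no case analysis.
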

\begin{proof}
	By the Chinese Reminder Theorem there exists some $b \pmod{P(A)}$ such that $b\equiv -a(p) \pmod{p}$ for any $p\in A$ and $a(p)$ given above. Then for any $n\in[0,m)$ we have that 
	\begin{align}
	b+n\equiv -a_p+a_p\equiv 0 \pmod{p}. \nonumber
	\end{align}
	So the largest gap between integer co-prime to $P(A)$ is at least $m$, or in other words
	\begin{align}
	m\leq j(P(A))-1. \nonumber
	\end{align}
\end{proof}
We are now ready to start with the aforementioned construction. 
\begin{lem}
	\label{mainlemma}
	Let $p$ be a prime and let $A,B$ be two disjoint sets of prime divisors of $p-1$. Suppose that $y$ is a $q_A$-th power non-residue for all $q_A\in A$ and that $z$ is $q_B$-th power non-residue for all $q_B\in B$. If $y$ is a $q_B$ power residue for all $q_B\in B$  then there exist some  $m\in [1,j(P(A))-1]$ such that 
	
	\begin{align}
	y^m \cdot z \nonumber
	\end{align}
	is a $q$-th power non-residue for all $q\in A \cup B$.
\end{lem}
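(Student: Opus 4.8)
The plan is to pass to discrete logarithms modulo $p-1$. Fix a primitive root $g$ modulo $p$ and write $\log$ for the index to base $g$, as in Remark \ref{rem2}, so that for nonzero $w$ one has that $w$ is a $q$-th power residue modulo $p$ if and only if $q \mid \log w$. In these terms the hypotheses read: $q \nmid \log y$ for every $q \in A$; $q \nmid \log z$ for every $q \in B$; and $q \mid \log y$ for every $q \in B$. What we must find is an $m \in [1, j(P(A))-1]$ with $q \nmid \log(y^m z) = m\log y + \log z$ for every $q \in A\cup B$.

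First I would observe that the primes of $B$ impose no condition on $m$: if $q \in B$ then $q \mid \log y$, so $m\log y + \log z \equiv \log z \not\equiv 0 \pmod q$ by hypothesis, and hence $y^m z$ is automatically a $q$-th power non-residue for every $q \in B$, whatever $m$ is. So the constraints come only from $A$. For a fixed $q \in A$, since $q \nmid \log y$ the residue $\log y$ is invertible modulo $q$, and the requirement $q \nmid m\log y + \log z$ is equivalent to $m \not\equiv a(q) \pmod q$, where $a(q) := -\log z \cdot (\log y)^{-1} \bmod q$ is a single residue class depending only on $q$ (and on $y,z$).

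Next I would feed the set of primes $A$ together with the classes $\{a(q)\}_{q\in A}$ into Lemma \ref{Jacobsthal}: it yields an integer $m$ with $m \not\equiv a(q)\pmod q$ for all $q\in A$ and $m \le j(P(A))-1$, and by the previous paragraph this $m$ makes $y^m z$ a simultaneous $q$-th power non-residue for all $q \in A\cup B$. The one remaining issue is to guarantee $m \ge 1$ rather than merely $m \ge 0$. The least valid exponent equals $0$ exactly when $0 \not\equiv a(q)\pmod q$ for all $q \in A$, i.e. when $q \nmid \log z$ for all $q\in A$; but in that case $z$ itself is already a $q$-th power non-residue for every $q\in A$, hence for every $q\in A\cup B$, so one sidesteps the difficulty there (and in the application in Section \ref{SJacobsthal} this degenerate case does not arise, since there $z$ is, by Remark \ref{rem2}, a $q$-th power residue for at least one $q\in A$, which forces $a(q)=0$ and therefore a least valid exponent $m\ge 1$).

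The genuinely delicate point is exactly this last one: isolating why the exponent can be taken strictly positive, and keeping careful track of which primes $y$, $z$ and $y^m z$ are residues or non-residues for while translating between the multiplicative statement and the additive conditions on $m\log y + \log z$. Everything else is a direct unravelling of the definition of a $q$-th power residue together with a single application of Lemma \ref{Jacobsthal}.
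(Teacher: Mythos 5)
Your proposal is correct and follows essentially the same route as the paper: translate to discrete logarithms, note that the primes of $B$ impose no condition on $m$ since $q\mid\log y$ there, reduce the $A$-conditions to avoiding one residue class $a(q)\equiv -\log z\,(\log y)^{-1}\pmod q$ per prime, and invoke Lemma \ref{Jacobsthal}. You are in fact slightly more careful than the paper about why $m$ can be taken $\geq 1$ rather than $\geq 0$, a point the paper's proof passes over silently.
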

\begin{proof}
	We observe that for each $q_A\in A$ there exist an unique residue class $a(q_A)$ such that
	\begin{align}
		y^{a(q_A)}\cdot z\equiv 0 \pmod{q_A}. \label{mlem1}
	\end{align}
	Indeed, taking logarithm of both sides of (\ref{mlem1}) we see that the only solution is given by ${a(q_A)}\equiv \log z/\log y \pmod{q_A}$ as $\log y\not \equiv 0 \pmod{q_A}$ by assumption. It follows by Lemma \ref{Jacobsthal} that there exists some integer $m\in [0,j(P(A))-1]$ such that 
	\begin{align}
		m \not \equiv a(q_A) \pmod{q_A} \nonumber
	\end{align}
	for all $q_A\in A$. And so $y^m\cdot z$ is not a $q_A$-th power residue. Finally for any $q_B\in B$, in light of the fact that $y$ is a $q_B$-th power residue, we have that 
	\begin{align}
		m\log y + \log z\equiv \log z \not \equiv 0 \pmod{q_B} \nonumber
	\end{align}
 as required.  
\end{proof}
Applying the above lemma recursively we obtain: 
\begin{cor}
	\label{Cor 2.1}
	Let $p$ be a  prime and let $A_1,A_2,...A_s$ be as in (\ref{A's}) then there exists integers $m_i\in [1,j(P(A_i))-1]$ such that 
	\begin{align}
		y= \prod_{i=1}^{s}\mathbf{g}(A_i)^{m_i}. \nonumber
	\end{align}
	is not a $q$-th power for any $q|p-1$. Moreover we can take $m_s=1$.
\end{cor}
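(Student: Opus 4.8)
The plan is to prove Corollary \ref{Cor 2.1} by induction on $s$, using Lemma \ref{mainlemma} as the inductive step. I would set things up so that at stage $i$ we have built an element $y_i$ that is a $q$-th power non-residue for every $q \in A_1 \cup \dots \cup A_i$, and is moreover a $q$-th power \emph{residue} for every $q \in A_{i+1} \cup \dots \cup A_s$; this second property is exactly what Lemma \ref{mainlemma} needs in order to keep going. The key point making the induction work is Remark \ref{rem2}: since $\mathbf{g}(A_i) < \mathbf{g}(A_j)$ for $i < j$, every $\mathbf{g}(A_i)$ with $i < j$ satisfies $\log(\mathbf{g}(A_i)) \equiv 0 \pmod q$ for all $q \in A_j$, hence any product of such $\mathbf{g}(A_i)$'s (with any exponents) is automatically a $q$-th power residue for $q \in A_j$.

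Concretely, I would argue as follows. Start the induction from the top: let $y_s = \mathbf{g}(A_s)$, so $m_s = 1$, which is a $q$-th power non-residue for all $q \in A_s$ by definition of $\mathbf{g}(A_s)$. Now suppose inductively that for some $i \le s$ we have found exponents $m_i, m_{i+1}, \dots, m_s$ with $m_k \in [1, j(P(A_k))-1]$ (and $m_s = 1$) such that
\begin{align}
y_i = \prod_{k=i}^{s} \mathbf{g}(A_k)^{m_k} \nonumber
\end{align}
is a $q$-th power non-residue for every $q \in A_i \cup \dots \cup A_s$. I want to produce $y_{i-1}$. Apply Lemma \ref{mainlemma} with the roles $A \leftarrow A_{i-1}$, $B \leftarrow A_i \cup \dots \cup A_s$, $y \leftarrow \mathbf{g}(A_{i-1})$, $z \leftarrow y_i$. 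The hypotheses are met: $\mathbf{g}(A_{i-1})$ is a $q_A$-th power non-residue for all $q_A \in A_{i-1}$ by definition; $y_i = z$ is a $q_B$-th power non-residue for all $q_B \in B$ by the inductive hypothesis; and $\mathbf{g}(A_{i-1}) = y$ is a $q_B$-th power residue for all $q_B \in B$ because $\mathbf{g}(A_{i-1}) < \mathbf{g}(A_k)$ for every $k \ge i$, so Remark \ref{rem2} gives $\log(\mathbf{g}(A_{i-1})) \equiv 0 \pmod{q_B}$ for all such $q_B$. The lemma then yields $m_{i-1} \in [1, j(P(A_{i-1}))-1]$ such that $\mathbf{g}(A_{i-1})^{m_{i-1}} \cdot y_i = y_{i-1}$ is a $q$-th power non-residue for all $q \in A_{i-1} \cup B = A_{i-1} \cup \dots \cup A_s$, completing the inductive step.

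After $s$ steps we reach $y_1 = \prod_{i=1}^{s} \mathbf{g}(A_i)^{m_i}$, which is a $q$-th power non-residue for every $q \in A_1 \cup \dots \cup A_s$. Since $A_1, \dots, A_s$ is exactly a partition of the set of prime divisors of $p-1$ (this is how the $A_i$ were constructed in \eqref{A's}, by grouping the $q_j$'s), this says $y_1$ is a $q$-th power non-residue for every prime $q \mid p-1$, which is the assertion; and $m_s = 1$ is retained, giving the "moreover" clause. Renaming $y_1$ as $y$ finishes the proof.

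I do not expect a genuine obstacle here — the corollary is a clean recursion and all the real content sits in Lemma \ref{mainlemma} and Remark \ref{rem2}. The one point requiring a little care is getting the direction of the induction right: one must peel off the \emph{smallest} $\mathbf{g}(A_i)$ at each stage (equivalently, build up from $\mathbf{g}(A_s)$ downward in index), because the "$y$ is a $q_B$-th power residue" hypothesis of Lemma \ref{mainlemma} is only available in that order — it relies on $\mathbf{g}(A)$ being smaller than every element of $\mathbf{g}(B)$ so that Remark \ref{rem2} applies. Running the recursion the other way would break this hypothesis.
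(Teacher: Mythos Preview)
Your proof is correct and follows essentially the same approach as the paper: both build the product by downward induction starting from $m_s=1$, and at each stage apply Lemma~\ref{mainlemma} with $A=A_t$, $B=A_{t+1}\cup\cdots\cup A_s$, $y=\mathbf{g}(A_t)$, $z=\prod_{i>t}\mathbf{g}(A_i)^{m_i}$, using Remark~\ref{rem2} to verify the residue hypothesis on $y$. Your write-up is in fact slightly cleaner than the paper's (which contains a small slip, writing ``$q\in A_t$'' where ``$q\in B$'' is meant), and your closing remark on why the induction must run in this direction is a nice addition.
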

\begin{proof}
We proceed by constructing the $m_i$ inductively: by Remark \ref{rem2} $\mathbf{g}(A_s)$ is the only element among the $\mathbf{g}(A_i)$'s which is not a $q$-th power for any $q\in A_s$ so we take $m_s=1$. By induction suppose that  we have chosen $m_s,...m_{t+1}$ for some $1\leq t<s$, then  we have 
\begin{align}
\log \left(\prod_{i=t+1}^{s}g(A_i)^{m_i} \right) \not \equiv 0 \pmod{q}     \nonumber 
\end{align}
for all $q\in A_s\cup A_{s-1}...\cup A_{t+1}.$ Now, we observe that again by Remark \ref{rem2}  the element $\mathbf{g}(A_t)$ is $q$-th power residue for all $q\in A_t$. Thus we can apply Lemma \ref{mainlemma} with $A= A_t$, $B=A_s\cup... \cup A_{t+1}$, $y=g(A_t)$ and $z=\prod_{i=t+1}^{n}\mathbf{g}(A_i)^{m_i}$ to find some integer $m_t\in [1,j(P(A_t))]$ such that $\prod_{i=t}^{s}\mathbf{g}(A_i)^{m_i}$ is not a $q$-th power non-residue for all $q\in A_s\cup...\cup A_t$ and by induction the proof is concluded. 
\end{proof}	
Combining the Corollary with Propostion \ref{Vin} we obtain the following:  
\begin{prop}
	\label{main1}
	Let $p$ be a prime and let $A_1,A_2,...,A_s$ be defined as in (\ref{A's}), furthermore inductively define the divisors $d_1=\prod_{q|p-1}q$ and $d_i=d_{i-1}/\prod_{q\in A_{i-1}}q.$ Then for any $\epsilon>0$ we have the bound
	\begin{align}
		g(p)\ll \exp\left[\log p \left( \frac{1}{4\sqrt{e}} + \sum_{i=1}^{s-1} \frac{j(P_{A_i})}{u(d_i)}+ \epsilon\right)\right]. \nonumber
	\end{align}
	\begin{proof}
		Since $g(p)$ is the smallest element mod $p$ which is not a $q$-th power residue for all $q|p-1$, Corollary \ref{Cor 2.1} gives us the bound
		\begin{align}
		g(p)\leq \prod_{i=1}^{s-1}\mathbf{g}(A_i)^{m_i}\mathbf{g}(A_s)	\label{prop2,1}
		\end{align}
		where $m_i\leq j(P(A_i))$. Now we claim that $\mathbf{g}(A_i)=\mathbf{g}(d_i)$: indeed, $\mathbf{g}(d_i)$ is not a $q_i$-th power non residue for some $q_i|d_i$  and by minimality we have $\mathbf{g}(d_i)=\mathbf{g}(q_i)$; thus, by definition of the $A$'s, we must have $\mathbf{g}(q_i)=\mathbf{g}(A_j)$ for some $j$. Since $d_i$ does not contains prime divisors from the sets $A_j$ with $j<i$ we must have $j\geq i$, but $j>i$ cannot happen either because it would imply that $\mathbf{g}(d_i)>\mathbf{g}(A_i)$, contradicting minimality. So our claim is proved and together with (\ref{prop2,1})  we deduce that 
		\begin{align}
				g(p)\ll \prod_{i=1}^{s-1}\mathbf{g}(d_i)^{m_i}\mathbf{g}(d_s). \nonumber
		\end{align}  
		Now we apply Proposition \ref{Vin} (i) to $\mathbf{g}(d_i)$ for $1\leq i\leq s-1$ and bound the $m_i$ trivially to obtain 
	\begin{align}
		g(p)\leq \exp\left[\log p\left(\sum_{i=1}^{s-1}\frac{j(P_{A_i})}{u(d_i)}\right)\right]\mathbf{g}(d_s) .\label{prop2,2}
	\end{align}
	To bound $\mathbf{g}(d_s)$ we wish to apply Proposition \ref{Vin} (ii), so we split into two cases: if there exist some prime $q|d_s$ such that $u(q)> 4\sqrt{e}$ (where $u(\cdot)$ was defined in section 2) then $\mathbf{g}(d_s)\leq p^{1/4\sqrt{e}}$ by Proposition \ref{Vin} (i). Otherwise all the prime divisors of $d_s$ are uniformly bounded and we can apply Proposition \ref{Vin2} to deduce that for any $\epsilon>0$ we have $\mathbf{g}(d_s)\ll p^{1/4u(d_s) + \epsilon}$. Finally, since $u(\cdot)$ is decreasing and $d_s\geq 2$ we have $u(d_s)\geq u(2)=\sqrt{e}$ (see Remark \ref{rem1}). All in all we obtain
	
	\begin{align}
		\mathbf{g}(d_s)\leq \exp\left[\log p \left( \frac{1}{4\sqrt{e}}+ \epsilon\right)\right] \label{Prop2,3}.
	\end{align}
 Hence inserting (\ref{Prop2,3}) into (\ref{prop2,2}) we deduce the Proposition. 
	\end{proof}
\end{prop}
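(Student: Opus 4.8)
The plan is to combine the explicit construction of a simultaneous power non-residue from Corollary~\ref{Cor 2.1} with the individual bounds of Proposition~\ref{Vin}. First I would invoke Corollary~\ref{Cor 2.1}, which produces integers $m_i\in[1,j(P(A_i))-1]$ with $m_s=1$ such that $\prod_{i=1}^{s}\mathbf{g}(A_i)^{m_i}$ is a $q$-th power non-residue for every prime $q\mid p-1$. Since $g(p)$ is the smallest integer which is a $q$-th power non-residue for every prime $q\mid p-1$, this already gives $g(p)\le\prod_{i=1}^{s-1}\mathbf{g}(A_i)^{m_i}\,\mathbf{g}(A_s)$ with $m_i\le j(P(A_i))$.

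The key step is then to identify $\mathbf{g}(A_i)$ with $\mathbf{g}(d_i)$, so that Proposition~\ref{Vin} becomes applicable; I would do this by a minimality argument run in both directions. On one hand $\mathbf{g}(d_i)$ is a $q$-th power non-residue for some prime $q\mid d_i$, so by minimality $\mathbf{g}(d_i)=\mathbf{g}(q)$, and this $q$ lies in exactly one block $A_j$. Since $d_i=d_{i-1}/\prod_{q'\in A_{i-1}}q'$ removes precisely the primes of $A_1,\dots,A_{i-1}$, we must have $j\ge i$; and $j>i$ is impossible, for it would give $\mathbf{g}(d_i)=\mathbf{g}(A_j)>\mathbf{g}(A_i)\ge\mathbf{g}(q')$ for any prime $q'\in A_i$ dividing $d_i$, contradicting the minimality of $\mathbf{g}(d_i)$. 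Hence $\mathbf{g}(A_i)=\mathbf{g}(d_i)$ for every $i$.

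With this identification in hand I would feed everything into Proposition~\ref{Vin}. For $1\le i\le s-1$, part (i) gives $\mathbf{g}(d_i)\ll p^{1/u(d_i)}$, and bounding $m_i\le j(P(A_i))$ trivially then yields $\prod_{i=1}^{s-1}\mathbf{g}(d_i)^{m_i}\ll\exp\!\left[\log p\sum_{i=1}^{s-1}\tfrac{j(P(A_i))}{u(d_i)}\right]$. For the remaining factor $\mathbf{g}(d_s)$ I would split into cases. If $d_s$ has a prime divisor $q$ with $u(q)>4\sqrt{e}$, then part (i) applied to $q$ alone already gives $\mathbf{g}(d_s)\le\mathbf{g}(q)\ll p^{1/u(q)}<p^{1/(4\sqrt{e})}$. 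Otherwise, since $u(\cdot)$ is decreasing by Corollary~\ref{corVin}, every prime divisor of $d_s$, and hence $d_s$ itself, is absolutely bounded, so part (ii) applies and gives $\mathbf{g}(d_s)\ll p^{1/(4u(d_s))+\epsilon}\ll p^{1/(4\sqrt{e})+\epsilon}$, using $d_s\ge 2$ and $u(2)=\sqrt{e}$ from Remark~\ref{rem1}. In either case $\mathbf{g}(d_s)\le\exp\!\left[\log p\left(\tfrac{1}{4\sqrt{e}}+\epsilon\right)\right]$, and multiplying the two estimates gives the stated bound.

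The step I expect to be the main obstacle is the bookkeeping around the identity $\mathbf{g}(A_i)=\mathbf{g}(d_i)$: one has to keep precise track of which primes survive into each $d_i$ and apply minimality on both sides, and one must also verify that the $s-1$ implied constants coming from Proposition~\ref{Vin}(i), raised to the powers $m_i$, can be swallowed into the $\epsilon\log p$ term. This last point is harmless here, since the resulting bound is of use only in the regime where $\sum_{i=1}^{s-1}j(P(A_i))/u(d_i)$ is already small.
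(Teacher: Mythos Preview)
Your proposal is correct and follows essentially the same route as the paper: invoke Corollary~\ref{Cor 2.1}, establish $\mathbf{g}(A_i)=\mathbf{g}(d_i)$ by the two-sided minimality argument, apply Proposition~\ref{Vin}(i) to the first $s-1$ factors, and split into the two cases for $\mathbf{g}(d_s)$ using Proposition~\ref{Vin}(ii) and $u(2)=\sqrt{e}$. Your closing remark about the implied constants from Proposition~\ref{Vin}(i) being raised to the $m_i$ is a point the paper passes over in silence; your observation that it is absorbed into the $\epsilon\log p$ term precisely when the bound is useful is the right resolution.
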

\section{Average behavior of Jacobsthal function}
\label{SJacobsthal}
In this section we study in more detail the Jacobsthal function in order to make explicit the bound in Proposition \ref{main1}. Jaconsthal himself observed that if $n$ is a positive integer such that for all primes $q$ dividing $n$ one has $q>\omega(n)$ then by the pigeon-principle follows that 
\begin{align}
	j(n)\leq \omega(n) \nonumber
\end{align}
We will show that a similar bound holds uniformly on the divisors $d$ of $p-1$  for \textquotedblleft most \textquotedblright primes $p$ when $p-1$ does not have small prime factors (but $2$), more precisely we prove that: 
\begin{prop}
	\label{main2}
	For any $\delta>0$ there exists some (large) $y=y(\delta)>0$ such that 
	\begin{align}
	\mathbb{P}_x\left(j(d)\leq 10\omega(d)\big\rvert \text{$p$ is $y$-rough}\right)>1-\delta \nonumber
	\end{align}
	uniformly for all divisors $d|p-1$. 
\end{prop}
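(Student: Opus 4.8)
The plan is to split the statement into a deterministic input and a probabilistic input. The deterministic input is an elementary sieve bound: \emph{if $n>1$ and $\sum_{q\mid n,\,q>2}\tfrac1q\le\tfrac14$, then $j(n)\le 4\omega(n)+1\le 10\omega(n)$.} This is the usual pigeon‑hole count: among any $k$ consecutive integers the number divisible by a prime $q$ is at most $k/q+1$, so the number of integers in the window that are \emph{not} coprime to $n$ is at most $\tfrac{k}{2}+1+\sum_{q\mid n,\,q>2}(\tfrac{k}{q}+1)\le k\big(\tfrac12+\sum_{q\mid n,\,q>2}\tfrac1q\big)+\omega(n)$, which is strictly less than $k$ as soon as $k>4\omega(n)$ under our hypothesis; hence $j(n)\le 4\omega(n)+1$, and $4\omega(n)+1\le 10\omega(n)$ since $\omega(n)\ge1$. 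The crucial feature is that this condition is hereditary: if $p$ is $y$‑rough and $\sum_{q\mid p-1,\,q>2}\tfrac1q\le\tfrac14$, then for \emph{every} divisor $d\mid p-1$ with $d>1$ one automatically has $\sum_{q\mid d,\,q>2}\tfrac1q\le\sum_{q\mid p-1,\,q>2}\tfrac1q\le\tfrac14$, whence $j(d)\le 10\omega(d)$; and $d=1$ never occurs in the application (all divisors in Proposition~\ref{main1} are $\ge 2$). So it suffices to show $\mathbb{P}_x\big(\sum_{q\mid p-1,\,q>2}\tfrac1q>\tfrac14\,\big|\,p\text{ is }y\text{-rough}\big)<\delta$ for $y=y(\delta)$ large enough.

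Write $B$ for the event ``$p$ is $y$‑rough''. On $B$ the sum over $q>2$ equals the sum over $q\ge y$, so the conditional probability above is $\mathbb{P}_x(A'\cap B)/\mathbb{P}_x(B)\le\mathbb{P}_x(A')/\mathbb{P}_x(B)$ with $A'=\{\sum_{q\mid p-1,\,q\ge y}1/q>1/4\}$. For the numerator, Markov gives $\mathbb{P}_x(A')\le 4\,\mathbb{E}_x\big[\sum_{q\mid p-1,\,q\ge y}\tfrac1q\big]=4\sum_{y\le q\le x}\tfrac1q\cdot\tfrac{\#\{p\le x:\,q\mid p-1\}}{\pi(x)}$. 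I would split this sum at $q=\sqrt x$: for $y\le q\le\sqrt x$, Brun--Titchmarsh yields $\#\{p\le x:q\mid p-1\}\ll x/(q\log x)$, so that part is $\ll\sum_{q\ge y}1/q^2\ll 1/y$; for $\sqrt x<q\le x$ the trivial bound $\#\{p\le x:q\mid p-1\}\le x/q\le\sqrt x$ makes that part $\ll(\sqrt x\log x/x)\sum_{q\le x}1/q\ll(\log x\log\log x)/\sqrt x=o(1)$. Hence $\mathbb{P}_x(A')\ll 1/y$ for $x$ large. For the denominator, inclusion--exclusion over the primes in $(2,y)$ together with Dirichlet's theorem in arithmetic progressions (valid once $x\ge x_0(y)$) gives $\mathbb{P}_x(B)=\prod_{2<\ell<y}\big(1-\tfrac1{\ell-1}\big)+o_y(1)$, and $\prod_{2<\ell<y}(1-\tfrac1{\ell-1})\asymp\prod_{\ell<y}(1-\tfrac1\ell)\asymp 1/\log y$ by Mertens' theorem. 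Combining, $\mathbb{P}_x(\,\cdot\mid B)\ll(\log y)/y$, which is $<\delta$ once $y$ is chosen large in terms of $\delta$.

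The step that needs the most care is the first‑moment estimate: one must check that $\mathbb{E}_x\big[\sum_{q\mid p-1,\,q\ge y}1/q\big]\ll 1/y$ holds \emph{uniformly for all large $x$} — the $q>\sqrt x$ tail is where the only genuine $x$‑dependence enters, and it must be kept $o(1)$ so as not to swamp the $1/y$ gain — and that the competing lower bound $\mathbb{P}_x(B)\gg 1/\log y$ does not decay faster than an inverse logarithm, so that the ratio still tends to $0$ with $y$. Both facts are standard consequences of Brun--Titchmarsh and of Mertens' estimates respectively; once they are in place the remaining argument is bookkeeping.
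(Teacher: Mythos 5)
Your proposal is correct, and it takes a genuinely different and in fact leaner route than the paper. The paper proves Proposition~\ref{main2} by combining two probabilistic lemmas: Lemma~\ref{jac}, which runs an inclusion--exclusion sieve over the small prime factors $Q$ of $d$ in a window of length $z\asymp\frac{d}{\phi(d)}\omega(d)$ and therefore must control the error term $2^{\omega(Q)}$ --- this is what forces the second-moment estimate for $\omega(p-1,t)$ (Lemma~\ref{divisorofprimes}) and its uniform-in-$t$ upgrade via checkpoints (Corollary~\ref{primescor}); and Lemma~\ref{phi}, a first-moment bound showing $d/\phi(d)\le 5$ for most rough $p$. You instead use a plain union bound (Eratosthenes without M\"obius), whose error is only $O(\omega(n))$ per window rather than $O(2^{\omega(Q)})$, and this error is absorbed outright by a window of length $>4\omega(n)$ once $\sum_{q\mid n,\,q>2}1/q\le\frac14$; the hereditary nature of that hypothesis then gives uniformity over all divisors $d>1$ for free. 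The only probabilistic input you need is the single first-moment estimate $\mathbb{E}_x\bigl[\sum_{q\mid p-1,\,q\ge y}1/q\bigr]\ll 1/y$, which is essentially the computation already in Lemma~\ref{phi}. Your approach thus dispenses with Lemma~\ref{divisorofprimes} and Corollary~\ref{primescor} entirely. You are also more careful than the paper on one point: you convert the unconditional Markov bound into a conditional one by dividing by $\mathbb{P}_x(p\text{ is }y\text{-rough})\gg 1/\log y$ (the Mertens computation the paper only performs later, in the proof of Corollary~\ref{Main Corollary}); the resulting loss of a factor $\log y$ is harmless since $(\log y)/y\to 0$. The one caveat, which you flag yourself, is the trivial divisor $d=1$, for which $j(1)=1>10\,\omega(1)=0$; as stated the proposition should be read as ranging over divisors $d\ge 2$, which is all that is used downstream.
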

In order to do this we will need to control the number of divisors $d|p-1$ in certain intervals for a generic prime $p$ and this is  the content of the next lemma which is certainly well-known but we could not find a precise reference. 

\begin{lem}
	\label{divisorofprimes}
	Let $t$ be a large parameter, then uniformly for $\xi>1$ we have  
	\begin{align}
		\mathbb{P}_x\left(|\omega(p-1,t)-\log\log t|\geq \xi \sqrt{\log\log t}\right)\ll \xi^{-2}  \nonumber
	\end{align}
	where the constant implied in the notation is absolute. 
\end{lem}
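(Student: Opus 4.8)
The plan is to prove this by a second--moment (Tur\'an--Kubilius type) estimate for the shifted primes $p-1$ with $p\le x$, followed by Chebyshev's inequality. Here and below $q,q_1,q_2$ denote primes, and $\omega(p-1,t)=\sum_{q\le t}\mathbf{1}_{q\mid p-1}$. Put
\begin{align}
	E(t):=\sum_{q\le t}\frac1{q-1}=\sum_{q\le t}\frac1{\varphi(q)}=\log\log t+O(1) \nonumber
\end{align}
by Mertens' theorem. The crux is the inequality
\begin{align}
	\sum_{p\le x}\left(\omega(p-1,t)-E(t)\right)^2\ll \pi(x)\,\log\log t \label{TK}
\end{align}
with an \emph{absolute} implied constant, valid for $t$ at most a fixed small power of $x$ (say $t\le x^{1/5}$), which is the only range we shall use. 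Granting (\ref{TK}), Chebyshev's inequality gives $\mathbb{P}_x\left(|\omega(p-1,t)-E(t)|\ge\eta\sqrt{\log\log t}\right)\ll\eta^{-2}$ for every $\eta>0$; since $|E(t)-\log\log t|=O(1)$, for $t$ large the event $|\omega(p-1,t)-\log\log t|\ge\xi\sqrt{\log\log t}$ is contained in the event $|\omega(p-1,t)-E(t)|\ge\tfrac12\xi\sqrt{\log\log t}$ for all $\xi\ge1$, so taking $\eta=\tfrac12\xi$ yields the Lemma.

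To prove (\ref{TK}) I would expand the square as
\begin{align}
	\sum_{p\le x}\left(\omega(p-1,t)-E(t)\right)^2=\sum_{p\le x}\omega(p-1,t)^2-2E(t)\sum_{p\le x}\omega(p-1,t)+\pi(x)E(t)^2, \nonumber
\end{align}
and use $\omega(p-1,t)^2=\omega(p-1,t)+\sum_{q_1\ne q_2\le t}\mathbf{1}_{q_1q_2\mid p-1}$, so that everything is reduced to counting $\pi(x;m,1)$ for moduli $m=q\le t$ and $m=q_1q_2\le t^2$. Since $m$ runs over integers up to $t^2$, a small power of $x$, the Bombieri--Vinogradov theorem (whose implied constants are absolute) gives $\sum_{m\le t^2}\left|\pi(x;m,1)-\pi(x)/\varphi(m)\right|\ll x(\log x)^{-10}$, and hence each $\pi(x;m,1)$ may be replaced by $\pi(x)/\varphi(m)$ at a total cost of $O(x(\log x)^{-10})$. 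This gives $\sum_{p\le x}\omega(p-1,t)=\pi(x)E(t)+O(x(\log x)^{-10})$ and, using $\sum_{q_1\ne q_2\le t}\varphi(q_1)^{-1}\varphi(q_2)^{-1}=E(t)^2-\sum_{q\le t}(q-1)^{-2}=E(t)^2+O(1)$, also $\sum_{p\le x}\omega(p-1,t)^2=\pi(x)E(t)^2+O(\pi(x)E(t))$. Substituting these into the expansion, the two main terms $\pi(x)E(t)^2$ cancel against $-2E(t)\cdot\pi(x)E(t)$, and what is left is $O(\pi(x)E(t))=O(\pi(x)\log\log t)$, i.e.\ (\ref{TK}).

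I expect the delicate point to be precisely the use of Bombieri--Vinogradov: one genuinely needs the \emph{asymptotic} for $\pi(x;m,1)$ on average over the modulus $m$, not merely a Brun--Titchmarsh upper bound, since an upper bound alone cannot produce the cancellation that removes the $E(t)^2$ main term. (If one restricted $t$ to a narrower range such as $t\le(\log x)^A$ one could instead invoke Siegel--Walfisz for the individual moduli, but at the cost of an ineffective constant; Bombieri--Vinogradov keeps all constants absolute and widens the range of $t$, as the statement demands.) Everything else is routine estimation with Mertens' theorem and the expansion of the square above.
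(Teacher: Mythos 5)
Your proposal is correct and follows essentially the same route as the paper: a first- and second-moment computation for $\omega(p-1,t)$ using the Bombieri--Vinogradov theorem for moduli up to a small power of $x$ (the paper works with $t\le x^{0.24}$ so that $q_1q_2\le x^{0.48}$), followed by Chebyshev's inequality. The one point you leave dangling is the range where $t$ exceeds a fixed power of $x$ --- your remark that this is ``the only range we shall use'' is not quite right, since the lemma is later applied with $t$ up to $p-1$ --- but the paper closes this in one line by observing that $|\omega(p-1,t)-\omega(p-1,x^{0.24})|\le 5$ uniformly in $p$, so the general case reduces to the restricted one.
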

\begin{proof}
	First of all we observe that it is enough to prove the lemma for $t\leq x^{0.24}$, in fact if $t>x^{0.24}$ then 
	$|\omega(p-1,t)-\omega(p-1,x^{0.24})|\leq 5$ uniformly in $p$ and since $t$ is assumed to be large,  this difference is unimportant. So we begin by computing the first moment
	\begin{align}
			\mathbb{E}_x(\omega(p-1,t))=\frac{1}{\pi(x)}\sum_{p\leq x} \sum_{\substack{q|p-1\\ q \text{prime}}}1 
	\end{align}
	 Using the Bombieri Vinogradov Theorem \cite[Page 420]{IK} in the form $\sum_{q<t}\pi(x;q,1)=\sum_{q<t}\pi(x)/\varphi(q)+O(x/\log ^{2}x)$, the Prime Number Theorem $\pi(x)\sim x/\log x$ and switching the order of summation we obtain 
	\begin{align}
		\mathbb{E}_x(\omega(p-1,t))=\frac{1}{\pi(x)}	\sum_{q\leq t} \sum_{p\equiv 1 (q)}1= \sum_{q\leq t}\left(\frac{1}{\varphi(q)}+ \left(\frac{1}{\log x}\right)\right)= \log\log t +O(1) \label{mean}
	\end{align}
	Now  we can compute the second moment  expanding the square as  
	\begin{align}
	\mathbb{E}_x(\omega(p-1,t)^2)=\mathbb{E}_x\left(\sum_{\substack{q_1=q_2<t \\ q^2_1|p-1}}1 + \sum_{\substack{q_1\neq q_2 \\q_1<t , q_2<t \\ q_1q_2|p-1 }}1\right) \label{L3}
	\end{align}
Since $t\leq x^{0.24}$ we have $q_1\cdot q_2\leq x^{0.48}$ and so we can use Bombieri-Viogradov theorem again to see that 
\begin{align}
	\frac{1}{\pi(x)}\sum_{p\leq x}\sum_{\substack{q_1=q_2<t \\ q^2_1|p-1}}1=\sum_{q<t}\frac{1}{\varphi(q)^2}+o(1)=O(1) \label{L1}
\end{align}
and 
\begin{align}
\frac{1}{\pi(x)}	\sum_{\substack{q_1\neq q_2 \\q_1<t , q_2<t \\ q_1q_2|p-1 }}1= \sum_{q_1<t,q_2<t}\frac{1}{\varphi(q_1)\varphi(q_2)}=\log\log^2t+ O(\log\log t) \label{L2}
\end{align}
Inserting (\ref{L1}) and (\ref{L2}) into (\ref{L3}) and using (\ref{mean}) we can compute the variance of $\omega(p-1,t)$ as 
\begin{align}
	\mathbb{E}_x(\omega(p-1,t)^2)-\mathbb{E}_x(\omega(p-1,t))^2\ll \log\log t. \nonumber
\end{align}
Hence the Lemma follows at once by Chebyshev inequality. 
\end{proof}
\begin{cor}
	\label{primescor}
	Let $a$ be a (large) parameter then we have that
	\begin{align}
		\mathbb{P}_x\left(\underset{a<t<p-1}{\sup}\left|\frac{\omega(p-1,t)}{\log\log^2 t}\right|\ll 2\right)\ll\log\log^{-2}(a) \nonumber
	\end{align}
		where the constant implied in the notation is absolute. 
\end{cor}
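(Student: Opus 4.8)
The plan is to replace the supremum over the continuum $a<t<p-1$ by a sum over a very sparse sequence of scales, using that $t\mapsto\omega(p-1,t)$ is non-decreasing, and then to apply Lemma \ref{divisorofprimes} scale by scale. First I would set $j_{0}=\lfloor\log\log a\rfloor$ and, for $j\ge j_{0}$, define the scales $t_{j}=\exp\exp(j)$, so that $\log\log t_{j}=j$ and $t_{j_{0}}\le a$. The intervals $[t_{j},t_{j+1})$ then cover $(a,\infty)$, and since $\omega(p-1,t)$ is constant on each block while $(\log\log t)^{2}\ge j^{2}$ on $[t_{j},t_{j+1})$, one obtains
\begin{align}
\sup_{a<t<p-1}\frac{\omega(p-1,t)}{(\log\log t)^{2}}\le\sup_{j_{0}\le j\le\lfloor\log\log x\rfloor}\frac{\omega(p-1,\min(t_{j+1},x))}{j^{2}}, \nonumber
\end{align}
the range of $j$ being finite because $t<p-1\le x$ forces $t_{j}\le x$ (and $\omega(p-1,x)=\omega(p-1)$ when $p\le x$). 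Consequently the event $\{\sup_{a<t<p-1}\omega(p-1,t)/(\log\log t)^{2}>2\}$, whose probability is what we must bound, is contained in $\bigcup_{j\ge j_{0}}\{\omega(p-1,\min(t_{j+1},x))>2j^{2}\}$.

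It then remains to estimate each term of this union. For fixed $j$ I would apply Lemma \ref{divisorofprimes} with $t=\min(t_{j+1},x)$, for which $\log\log t\le j+1$, choosing $\xi=\xi_{j}$ so that $\log\log t+\xi_{j}\sqrt{\log\log t}=2j^{2}$; provided $a$, and hence $j_{0}$, is large enough this forces $\xi_{j}\ge j^{2}/\sqrt{j+1}\gg j^{3/2}>1$, so the lemma applies and gives
\begin{align}
\mathbb{P}_{x}\bigl(\omega(p-1,\min(t_{j+1},x))>2j^{2}\bigr)\le\mathbb{P}_{x}\bigl(|\omega(p-1,t)-\log\log t|\ge\xi_{j}\sqrt{\log\log t}\bigr)\ll\xi_{j}^{-2}\ll j^{-3}. \nonumber
\end{align}
Summing over $j\ge j_{0}$ and using $\sum_{j\ge j_{0}}j^{-3}\ll j_{0}^{-2}$ then yields the bound $\ll(\log\log a)^{-2}$, since $j_{0}\asymp\log\log a$.

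The crux of the argument, and the only step I expect to be a real obstacle, is the choice of scales: a dyadic partition $t_{j}=2^{j}$ would be fatal, since then $\log\log t_{j}\asymp\log j$ and the union bound would become $\sum_{j}(\log j)^{-3}$, which diverges. One genuinely needs the doubly-exponential spacing $\log\log t_{j}=j$, which converts the union bound into the convergent series $\sum_{j\ge j_{0}}j^{-3}$ with tail of precisely the right order $j_{0}^{-2}\asymp(\log\log a)^{-2}$. The constant $2$ is otherwise immaterial: because $\omega(p-1,t)$ has typical size only $\log\log t=o((\log\log t)^{2})$, any fixed positive constant could be used in its place after adjusting the threshold in Lemma \ref{divisorofprimes} accordingly, and the same bound results.
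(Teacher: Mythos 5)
Your proof is correct and follows essentially the same route as the paper: doubly-exponential checkpoints $t_j=\exp\exp(j)$ starting at $j_0\asymp\log\log a$, monotonicity of $t\mapsto\omega(p-1,t)$ to reduce the supremum to the checkpoints, Lemma \ref{divisorofprimes} with $\xi\asymp j^{3/2}$ at each scale, and the convergent tail $\sum_{j\ge j_0}j^{-3}\ll(\log\log a)^{-2}$. The only quibble is the phrase ``$\omega(p-1,t)$ is constant on each block,'' which should read ``non-decreasing''; the displayed inequality only uses monotonicity, so nothing is affected.
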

\begin{proof}
	We introduce the check-points $t_j=\exp\exp j$ with $j\geq \log\log a$, then, bearing in mind that $\omega(p-1,t)$ in non-decreasing and it takes only integer values and that $|\log\log t_{j+1}-\log\log t_j|\leq 1$, we have that 
	\begin{align}
	&\underset{a<t<p-1}{\sup}\left|\frac{\omega(p-1,t)}{\log\log^2 t}\right|\geq 2 \hspace{15mm}\Longrightarrow  & \underset{a<t_j<p-1}{\sup}\left|\frac{\omega(p-1,t_j)}{j^2}\right|\geq 1 \nonumber
	\end{align}
	Now by Lemma \ref{divisorofprimes} with $\xi= j^{3/2}$ we see that we can bound the number of exemptions by 
	\begin{align}
	O\left(\sum_{\log\log a\leq j}j^{-3}\right)\ll \log\log^{-2}(a) \nonumber
	\end{align}
	as required. 
\end{proof}

 The next lemma partially follows the proof of the main theorem in \cite{E1}. 
\begin{lem}
	\label{jac}
	For all $\delta_1>0$ there exists some $y_1=y_1(\delta)$ such that 
	\begin{align}
		\mathbb{P}_x\left(j(d)\leq  \frac{2d}{\phi(d)}\omega(d)\Big\rvert \text{$p$ is $y_1$-rough}\right)>1-\delta_1 \nonumber
	\end{align}
	uniformly over the divisors $d$ of $p-1$.\footnote{The constants appearing in this and the following lemma are just chosen by the author and by no-means optimal, they could be substitute with any constant larger then 1 which is independent of all the parameters. }
\end{lem}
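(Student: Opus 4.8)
The plan is to split the statement into a deterministic pigeonhole bound and a first-moment estimate. Write $S(d)=\sum_{q\mid d,\ q>2}1/q$. Since $p$ is $y_1$-rough, every odd prime dividing $p-1$ exceeds $y_1$, so $S(d)\le\sum_{q\mid p-1,\ q>y_1}1/q=:T$ for every $d\mid p-1$; hence it suffices to prove (i) that $S(d)\le 1/4$ forces $j(d)\le \frac{2d}{\phi(d)}\omega(d)$, and (ii) that $\mathbb P_x\big(T\le 1/4\mid p\text{ is }y_1\text{-rough}\big)>1-\delta_1$ once $y_1=y_1(\delta_1)$ is large enough.

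For (i) I would follow the pigeonhole idea in the proof of the main result of \cite{E1}. Assume first $2\mid d$ and set $k'=\omega(d)-1$, the number of odd prime factors of $d$. A window of $m=2N$ consecutive integers contains exactly $N$ odd integers, and for each odd prime $q\mid d$ the odd multiples of $q$ form a single residue class modulo $2q$, so at most $N/q+1$ of those $N$ integers are divisible by $q$. Thus if $N(1-S(d))>k'$ some odd integer of the window is coprime to $d$; choosing $N$ minimal gives $j(d)\le 2k'/(1-S(d))+2$, which under $S(d)\le 1/4$ is at most $\frac83\omega(d)-\frac23\le 4\omega(d)\le\frac{2d}{\phi(d)}\omega(d)$, the last step using $d/\phi(d)\ge 2$. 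When $d$ is odd one does not set aside the prime $2$: each prime covers at most $m/q+1$ integers of a window of length $m$, giving $j(d)\le\omega(d)/(1-S(d))+1\le\frac43\omega(d)+1\le 2\omega(d)\le\frac{2d}{\phi(d)}\omega(d)$ for $\omega(d)\ge 2$, while $\omega(d)\le 1$ is immediate (for $d=q^b$ one has $j(q^b)=2\le\frac{2q}{q-1}=\frac{2d}{\phi(d)}\omega(d)$, and $d=1$ plays no role). The same argument replaces the constant $2$ by any $1+\eta$ as soon as $S(d)\le 1/4$ is strengthened to $S(d)\le c(\eta)$ --- this is the flexibility recorded in the footnote.

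For (ii) the main work is the first moment of $T$, computed exactly as in Lemma~\ref{divisorofprimes}: switching the order of summation, $\mathbb E_x(T)=\frac{1}{\pi(x)}\sum_{y_1<q\le x}\frac{\pi(x;q,1)}{q}$; Bombieri-Vinogradov handles the moduli $q\le x^{1/3}$ and the trivial bound $\pi(x;q,1)\le x/q+1$ handles $q>x^{1/3}$, whence $\mathbb E_x(T)\ll\sum_{q>y_1}\frac{1}{q(q-1)}+o(1)\ll\frac{1}{y_1\log y_1}+o(1)$. Separately, a one-dimensional sieve (Brun) together with Mertens' theorem gives $\mathbb P_x(p\text{ is }y_1\text{-rough})\gg\prod_{2<q<y_1}\big(1-\frac{1}{q-1}\big)\gg\frac{1}{\log y_1}$. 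Markov's inequality then yields $\mathbb P_x\big(T>\frac14\mid p\text{ is }y_1\text{-rough}\big)\le 4\,\mathbb E_x(T)/\mathbb P_x(p\text{ is }y_1\text{-rough})\ll\frac{1}{y_1}+o(1)$, which is $<\delta_1$ for $y_1$, and then $x$, large enough; combining with (i) completes the proof.

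I expect the only genuinely delicate point to be the conditioning in (ii): restricting to $y_1$-rough primes costs a factor $\asymp\log y_1$, which grows with $y_1$, so one cannot afford the weaker tail estimate of Corollary~\ref{primescor} --- its exceptional probability $\asymp(\log\log y_1)^{-2}$ decays far too slowly to survive division by $\mathbb P_x(p\text{ is }y_1\text{-rough})\asymp(\log y_1)^{-1}$. It is essential instead to use the first moment of $T$, which is as small as $(y_1\log y_1)^{-1}$ and so beats the conditioning loss by a full power of $y_1$.
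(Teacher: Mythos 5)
Your proof is correct, but it takes a genuinely different route from the paper's. The paper splits on the size of $\omega(d)$: when $\omega(d)\le\exp\exp(1/2\delta_1)$, roughness alone gives the pigeonhole bound, and when $\omega(d)$ is larger it runs Erd\H{o}s's inclusion--exclusion over the primes $q\mid d$ with $q<z=2d\omega(d)/\phi(d)$, which requires controlling both $\phi(Q)/Q$ and the error term $2^{\omega(Q)}$; the latter is handled by the second-moment tail bound of Corollary \ref{primescor}. You instead prove one deterministic statement --- $\sum_{q\mid d,\ q>2}1/q\le 1/4$ forces $j(d)\le \frac{2d}{\phi(d)}\omega(d)$ --- by discarding at most $m/q+1$ integers per window for each prime separately, and then verify the hypothesis with high probability by a first-moment computation that is essentially the paper's Lemma \ref{phi}; in effect you merge Lemma \ref{jac} and Lemma \ref{phi} into a single step. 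What the paper's route buys is generality (Erd\H{o}s's argument does not need $\sum_{q\mid n}1/q$ to be small, so it applies to arbitrary $n$); what yours buys is that it is more elementary and, more importantly, that it handles the conditioning on roughness cleanly. Your closing observation is a fair criticism of the paper as written: the unconditional exceptional probability coming from Corollary \ref{primescor} with $a=\exp\exp(1/2\delta_1)$ is $\asymp(\log\log a)^{-2}=4\delta_1^2$, while $\mathbb{P}_x(p\text{ is }y_1\text{-rough})\asymp 1/\log y_1\asymp e^{-1/2\delta_1}$, so the conditional exceptional probability is not obviously $O(\delta_1)$ without a further (quasi-)independence argument that the paper does not supply; your first-moment bound $\ll 1/y_1$ beats the $\log y_1$ conditioning loss outright. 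The only points to tidy are cosmetic: a $y_1$-rough $p$ has the odd prime factors of $p-1$ bounded below by $y_1$ rather than strictly above it, and you should state explicitly that the $o(1)$ term from Bombieri--Vinogradov is absorbed by letting $x\to\infty$ after $y_1$ has been fixed (your order of quantifiers already permits this).
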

\begin{proof}
Let $\delta_1>0$ be given, suppose that $\omega(d)\leq \exp\exp(1/2\delta_1)$ and take $y_1=2(\exp\exp(1/2\delta_1)+ 1)$ then every prime factors (apart possibly $2$)  of $d$ is bigger then $2(\omega(d)+1)$ so it can divide at most one integer in every interval of length $2(\omega(d)+1)$, while $2$ divides at most half of them. Thus every interval of length $2(\omega(d)+1)$ contains at least one integer co-prime to $d$. 

We can therefore assume that $\omega(d)>\exp\exp(1/2\delta_1)$, for all primes $p$ and all divisors $d|p-1$. Now pick a divisor $d|p-1$ and let $z=2d\omega(d)/\phi(d)$, $I$ be any interval of length $z$ an
finally let $Q=\prod_{q|d, q<z}q$. By inclusion exclusion principle the number of integer co-prime to $Q$ in $I$ is 

\begin{align}
\sum_{\substack{n\in I \\ (n,Q)=1}}1=\sum_{n\in I}\sum_{d|(n,Q)}\mu(d)= \sum_{d|Q}\mu(d)\left\lfloor \frac{z}{d}\right\rfloor> z\frac{\phi(Q)}{Q}- 2^{\omega(Q)} \label{J1}
\end{align} 
where we have used the identity $\sum_{d|Q}\mu(d)/d=\phi(Q)/Q$. Now we apply Corollary \ref{primescor} with $a=\exp\exp(1/2\delta_1)$ to see that, uniformly on all divisors $d|p-1$, we have 
\begin{align}
\omega(Q)\leq 2 \log\log^2 z \label{J2}
\end{align}
with an exceptional set of primes of size at most $\delta_1$. On the other hand each prime divisor bigger then $z$  divide at most one integer in $I$ and thus the integers coprime to $d$ in $I$ are at least 
\begin{align}
2\omega(d) -2^{2\log\log^2z} -\omega(d)>0
\end{align}
since $z<2\omega(d)\log (\omega(d))$ and so $2\log\log^2z< \log \omega(d)$, this concludes the proof. 
\end{proof}

\begin{lem}
	\label{phi}
	For all $\delta_2>0$ there exists some $y_2=y_2(\delta_2)$ such that 
	\begin{align}
	\mathbb{P}_x\left(\frac{d}{\phi(d)}\leq 5\mid \text{$p$ is $y_2$-rough}\right)>1-\delta_2
	\end{align}
	uniformly for all divisors $d$ of $p-1$. 
\end{lem}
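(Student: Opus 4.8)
The plan is to show that, once $p$ is sufficiently rough, the contribution of small primes to $d/\phi(d)$ is negligible, so that $d/\phi(d)$ is essentially controlled by the \emph{number} of prime factors of $d$, which we have already shown (Lemma \ref{divisorofprimes}, Corollary \ref{primescor}) is typically not too large. Concretely, write
\begin{align}
\frac{d}{\phi(d)}=\prod_{q\mid d}\left(1-\frac1q\right)^{-1}=2\prod_{\substack{q\mid d\\ q\neq 2}}\left(1-\frac1q\right)^{-1}, \nonumber
\end{align}
the factor $2$ coming from the possible prime $2\mid d$ (which is harmless for a bound by $5$, since $2\cdot \tfrac52 = 5$, so it suffices to bound the odd part by $\tfrac52$). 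Taking logarithms,
\begin{align}
\log\Big(\tfrac{d}{\phi(d)}\Big)\ll \sum_{\substack{q\mid d\\ q\neq 2}}\frac1q \le \sum_{\substack{q\mid p-1\\ q\neq 2}}\frac1q, \nonumber
\end{align}
so it is enough to make this last sum over \emph{all} odd prime divisors of $p-1$ smaller than an absolute constant.

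Next I would split that sum at the roughness threshold. If $p$ is $y_2$-rough then every odd prime $q\mid p-1$ satisfies $q\ge y_2$, so
\begin{align}
\sum_{\substack{q\mid p-1\\ q\neq 2}}\frac1q=\sum_{\substack{q\mid p-1\\ y_2\le q\le T}}\frac1q+\sum_{\substack{q\mid p-1\\ q> T}}\frac1q \nonumber
\end{align}
for a cutoff $T$ to be chosen. The tail $\sum_{q\mid p-1,\,q>T}1/q$ is at most $\omega(p-1,\cdot)/T$-type small once we know $\omega(p-1)\ll \log x/\log\log x$ trivially and $T$ can be taken a small power of $x$; more cleanly, since $p-1<x$ has at most $O(\log x/\log T)$ prime factors exceeding $T$, this tail is $O(1/T \cdot \log x/\log T)$, which is $o(1)$ for $T$ a fixed power of $x$, so it is certainly $\le 1$. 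For the middle range $y_2\le q\le T$, by Mertens' theorem $\sum_{y_2\le q}1/q\le \log\log T-\log\log y_2+O(1/\log y_2)$; but this is not itself small, so here I instead use that for \emph{most} $p$ the primes dividing $p-1$ in a dyadic range $[y_2, y_2^2, y_2^4,\dots]$ are sparse. Precisely, apply Corollary \ref{primescor} (or directly Lemma \ref{divisorofprimes}) with $a=y_2$: outside an exceptional set of primes of measure $\ll \log\log^{-2}(y_2)\le \delta_2$, we have $\omega(p-1,t)\le 2\log\log^2 t$ for all $t>y_2$, and then partial summation gives
\begin{align}
\sum_{\substack{q\mid p-1\\ y_2\le q}}\frac1q\ll \int_{y_2}^{\infty}\frac{\log\log^2 t}{t\log^2 t}\,dt\ll \frac{\log\log^2 y_2}{\log y_2}, \nonumber
\end{align}
which tends to $0$ as $y_2\to\infty$. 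Choosing $y_2=y_2(\delta_2)$ large enough that both this quantity is $\le \tfrac{1}{10}$ (say) and the exceptional measure is $\le\delta_2$ yields $d/\phi(d)\le 2e^{1/10}<5$ for all $d\mid p-1$ outside the exceptional set, proving the lemma.

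The uniformity over all divisors $d\mid p-1$ is automatic here because $d/\phi(d)\le (p-1)/\phi(p-1)$ for every such $d$ (dropping prime factors only decreases the ratio), so it suffices to bound the single quantity $(p-1)/\phi(p-1)$; there is no need to union-bound over divisors. The main obstacle, as in Lemma \ref{jac}, is that Mertens alone is too weak (it would force $y_2$ to depend on $x$), so the crux is the input from Corollary \ref{primescor}, which says that roughness plus a typicality assumption forces the prime divisors of $p-1$ above $y_2$ to be genuinely sparse on a logarithmic scale — after that, partial summation is routine.
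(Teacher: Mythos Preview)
Your proof is essentially correct but takes a genuinely different route from the paper's. The paper argues directly by a first-moment computation: it shows
\[
\mathbb{E}_x\Bigl[\sum_{\substack{q\mid p-1\\ q>y_2}}\frac{1}{q}\Bigr]\ll \sum_{q>y_2}\frac{1}{q\,\varphi(q)}\ll \frac{1}{y_2}
\]
via Brun--Titchmarsh (after splitting off the at most two primes $q>\sqrt{x}$), and then a single application of Markov's inequality gives an exceptional set of unconditional measure $O(y_2^{-1})$. By contrast, you feed in the pointwise maximal bound $\omega(p-1,t)\le 2\log\log^2 t$ from Corollary~\ref{primescor} and then sum by parts. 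This reuses machinery already built for Lemma~\ref{jac}, but it is more elaborate than needed here and yields only an exceptional set of size $O((\log\log y_2)^{-2})$, which is substantially weaker; in particular, since $\mathbb{P}_x(p\text{ is }y_2\text{-rough})\asymp 1/\log y_2$, your bound is not obviously small enough to survive the conditioning in the lemma's statement, whereas the paper's $O(y_2^{-1})$ certainly is. (The paper is itself somewhat casual about this conditioning in Lemma~\ref{jac}, so one could argue you are matching its level of rigor, but for \emph{this} lemma the paper's direct first-moment bound sidesteps the issue cleanly.)

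Two smaller remarks. First, your partial-summation integrand is off: with $N(t)=\omega(p-1,t)$ one gets $\int_{y_2}^{\infty} N(t)\,t^{-2}\,dt$, not $\int N(t)\,(t\log^2 t)^{-1}\,dt$; the correct bound is $O(\log\log^2 y_2/y_2)$, which is even stronger than what you wrote and of course still $o(1)$. Second, your observation that $d/\phi(d)\le (p-1)/\phi(p-1)$ for every $d\mid p-1$ makes the uniformity over divisors immediate --- this is a clean reduction that the paper also uses implicitly.
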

\begin{proof}
	Firstly we observe that for any divisor $d|p-1$
	\begin{align}
	\frac{d}{\phi(d)}=\prod_{\substack{q|d\\ q>y_2}}\left(1-\frac{1}{q}\right)^{-1}=\exp\left(\sum_{\substack{q|d\\ q>y_2}}\log\left(1-\frac{1}{q}\right)\right)=\exp\left(\sum_{\substack{q|d\\ q>y_2}}\frac{1}{q}+O\left(y_2^{-1}\right)\right) \nonumber
			\end{align}
			as the sum extended over the prime powers converges. So it is enough to show that for any $\delta_2>0$ there exists some $y_2$ such that 
			\begin{align}
				\mathbb{P}_x\left(\sum_{\substack{q|p-1\\ q>y_2}}\frac{1}{q}<\frac{\log 5}{2} \right)>1-\delta_2. \nonumber
			\end{align}
		To this end it is enough to estimate the first moment and the computation is similar to the one in Lemma \ref{divisorofprimes}. So we take expectation and we split the sum in $q\leq\sqrt{x}$ and $q>\sqrt{x}$. The latter (sum) contains at most $2$ elements and so can be bounded by $1/\sqrt{x}$, to estimate the former we switch the expectation and the sum and use Burn-Titchmarsh Theorem \cite[Page 167]{IK} (as we only need an upper bound) to obtain that 
		
		\begin{align}
			\mathbb{E}_x\left(\sum_{\substack{q|p-1\\ q>y_2}}\frac{1}{q}\right)&= 	\mathbb{E}_x\left(\sum_{\substack{q|p-1\\ \sqrt{x}>q>y_2}}\frac{1}{q}\right)+	\mathbb{E}_x\left(\sum_{\substack{q|p-1\\ q>\sqrt{x}}}\frac{1}{q}\right) \nonumber \\	
			 &=\frac{1}{\pi(x)}\sum_{y_2<q\leq \sqrt{x}}\frac{1}{q}\sum_{\substack{q|p-1\\}}1 +O\left(\frac{1}{\sqrt{x}}\right) \ll \sum_{y_2<q }\frac{1}{q\phi(q)}+\ll y_2^{-1}. \nonumber
		\end{align} 
		Hence by Markov inequality we obtain that 
			\begin{align}
				\mathbb{P}_x\left(\sum_{\substack{q|p-1\\ q>y_2}}\frac{1}{q}\geq\frac{\log 5}{2} \right)\leq \frac{2}{\log 5}\mathbb{E}\left(\sum_{\substack{q|p-1\\ q>y_2}}1/q\right)\ll y_{2}^{-1} \nonumber
			\end{align}
			so taking $y_2$ large enough in terms of $\delta_2$ we deduce the Lemma
		\end{proof}
	Combining the above two lemma we directly deduce the Proposition: 
\begin{proof}[Proof Proposition \ref{main2}]
	Pick $\delta_1=\delta/2$ in Lemma \ref{jac} and $\delta_2=\delta/2$ in Lemma \ref{phi} and $y=\max\{y_1,y_2\}$ (with $y_1$ and $y_2$ given by the respective lemma) then outside an event of probability $\delta_1+\delta_2=\delta$ and provided that $p$ is $y$-rough, we have both 
	\begin{align}
		&j(d)\leq 2 \frac{d}{\varphi(d)}\omega(d)& \frac{d}{\varphi(d)}\leq 5 \nonumber
	\end{align}
	uniformly on the divisors $d|p-1$ and the Proposition follows. 
\end{proof}

\section{Proof of Theorem \ref{Main theorem}}
\label{SMaintheorem}
The main theorem will follow directly from the next Proposition and a somehow technical lemma.   
\begin{prop}
	\label{main3}
	Let $p$ be a prime and let $q_1<...<q_{r}$ be the prime divisors of $p-1$, denote by $b_j=\prod_{i\leq j}q_i$ then we have that  for any $\epsilon_1>0$ and $\delta_1>0$ there exist some $y_1=y_1(\delta_1)$ such that 
	\begin{align}
		\mathbb{P}_x\left( g(p)\ll_{\epsilon} \exp\left[\log(p)\left(\frac{1}{4\sqrt{e}} +\sum_{j=2}^{r}\frac{10}{u(b_j)}+\epsilon_1\right)\right]\Big\rvert \text{$p$ is $y_1$-rough} \right)>1-\delta_1. \nonumber
	\end{align}
	Moreover the exceptional set consists of those primes failing condition $(1)$ in Theorem \ref{Main theorem}.
\end{prop}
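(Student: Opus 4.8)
The plan is to feed the probabilistic input of Proposition~\ref{main2} into the deterministic bound of Proposition~\ref{main1}; the only genuine computation is a rearrangement inequality converting the exponent $\sum_{i=1}^{s-1} j(P_{A_i})/u(d_i)$ of Proposition~\ref{main1} into $\sum_{j=2}^{r} 10/u(b_j)$.

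First I would apply Proposition~\ref{main2} with $\delta=\delta_1$ to obtain $y_1=y_1(\delta_1)$ such that, conditionally on $p$ being $y_1$-rough, for all but a set of primes of probability at most $\delta_1$ one has $j(d)\le 10\,\omega(d)$ simultaneously for every $d\mid p-1$; by definition this exceptional set is exactly the set of primes violating condition~$(1)$ of Theorem~\ref{Main theorem}. Fixing such a $p$ (so that $j(P_{A_i})\le 10\,\omega(P_{A_i})=10\,|A_i|$ for the sets $A_1,\dots,A_s$ of (\ref{A's})), Proposition~\ref{main1} yields, for every $\epsilon>0$,
\[
g(p)\ll_{\epsilon}\exp\!\left[\log p\left(\frac{1}{4\sqrt e}+10\sum_{i=1}^{s-1}\frac{|A_i|}{u(d_i)}+\epsilon\right)\right],
\]
so after renaming $\epsilon=\epsilon_1$ it suffices to prove the purely arithmetic inequality $\sum_{i=1}^{s-1}|A_i|/u(d_i)\le\sum_{j=2}^{r}1/u(b_j)$.

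To establish this, set $n_i=|A_i|+\cdots+|A_s|$, so that $n_1=r>n_2>\cdots>n_s\ge 1$ and $|A_i|=n_i-n_{i+1}$ (with $n_{s+1}=0$). Since $d_i=\prod_{q\in A_i\cup\cdots\cup A_s}q$ is a product of $n_i$ distinct prime divisors of $p-1$, it is at least the product $b_{n_i}$ of the $n_i$ smallest of them; as $d\mapsto 1/u(d)$ is non-increasing (Corollary~\ref{corVin}), this gives $1/u(d_i)\le 1/u(b_{n_i})$, and likewise $1/u(b_j)\ge 1/u(b_{n_i})$ whenever $j\le n_i$. Hence
\[
\sum_{i=1}^{s-1}\frac{|A_i|}{u(d_i)}\;\le\;\sum_{i=1}^{s-1}\frac{n_i-n_{i+1}}{u(b_{n_i})}\;\le\;\sum_{i=1}^{s-1}\ \sum_{j=n_{i+1}+1}^{n_i}\frac{1}{u(b_j)}\;=\;\sum_{j=n_s+1}^{r}\frac{1}{u(b_j)}\;\le\;\sum_{j=2}^{r}\frac{1}{u(b_j)},
\]
the equality holding because the blocks $\{n_{i+1}+1,\dots,n_i\}$, $1\le i\le s-1$, partition $\{n_s+1,\dots,r\}\subseteq\{2,\dots,r\}$. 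Combining the two displays proves the Proposition; in the degenerate cases $s=1$ or $r=1$ the right-hand sum is empty and the claim is already contained in Proposition~\ref{main1}.

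There is no serious obstacle here, since the Proposition is essentially a packaging of Propositions~\ref{main1} and~\ref{main2}; the step demanding the most care is the monotonicity direction of $u$ via Corollary~\ref{corVin}, because that is precisely what makes both $d_i\ge b_{n_i}\Rightarrow 1/u(d_i)\le 1/u(b_{n_i})$ and the block comparison run in the favourable direction. Everything else is bookkeeping with the two indexings of the prime divisors of $p-1$ (the $\mathbf g$-ordering underlying the $A_i$ and $d_i$, versus the size-ordering underlying the $b_j$), which is reconciled by the elementary observation that a product of $n_i$ prime divisors of $p-1$ is minimised by taking the $n_i$ smallest.
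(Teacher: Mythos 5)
Your proposal is correct and follows essentially the same route as the paper: feed Proposition~\ref{main2} into Proposition~\ref{main1} to replace $j(P_{A_i})$ by $10|A_i|$, then rearrange $\sum_i |A_i|/u(d_i)$ into $\sum_{j\ge 2} 1/u(b_j)$ using the monotonicity of $1/u$ and the fact that a product of $n_i$ distinct prime divisors of $p-1$ is at least $b_{n_i}$. Your explicit telescoping via the $n_i$ and the block partition of $\{n_s+1,\dots,r\}$ is in fact a cleaner and more complete writeup than the paper's iterative peeling step, and you correctly use the direction $u$ increasing (equivalently $1/u$ decreasing), which is what Corollary~\ref{corVin} actually establishes despite its statement reading ``decreasing''.
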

\begin{proof}
	Let $\epsilon_1,\delta_1>0$ be given, then by Proposition \ref{main1} we know that 
	
	\begin{align}
		g(p)\ll \exp\left[\log p \left( \frac{1}{4\sqrt{e}} + \sum_{i=1}^{s-1} \frac{j(P_{A_i})}{u(d_i)}+ \epsilon_1\right)\right] \label{20}
	\end{align}
	where $A_i$ and $d_i$ are defined in the Proposition. Applying Proposition \ref{main2} with  $\delta=\delta_1$ we have, uniformly on the divisors $d|p-1$, the bound 
	\begin{align}
		j(d)\leq 10 \omega(d)  \nonumber
	\end{align}
	provided $p$ is $y_1$-rough. Therefore we can bound the sum in (\ref{20}) as  
	\begin{align}
		\sum_{i=1}^{s-1} \frac{j(P_{A_i})}{u(d_i)}\leq 	10\sum_{i=1}^{s-1} \frac{\omega(P_{A_i})}{u(d_i)} = 10\sum_{i=1}^{s-1} \frac{|A_i|}{u(d_i)} \label{21}
	\end{align}
	Now we observe that, since $u(\cdot)$ is decreasing, if some $A_i>1$ then 
	
	\begin{align}
	 \frac{|A_i|}{u(d_i)}< \frac{1}{u(d_i)}+\frac{|A_i|-1}{u(d_i/q)} \nonumber
	\end{align}
	for any $q|d_i$. Inserting this observation into  (\ref{21}) and recalling the definition of $d_i$, we then deduce  that 
	\begin{align}
		\sum_{i=1}^{s-1} \frac{j(P_{A_i})}{u(d_i)}\leq 10 \sum_{j=2}^{r}\frac{1}{u(b_j)} \label{22}
	\end{align}
	Hence (\ref{22}) holds outside the event $j(d)\leq 10 \omega(d)$ which has probability at most $\delta_1$ provided $p$ is $y_1$-rough and so combining with  (\ref{20}) we obtain the Proposition. 
\end{proof}

\begin{lem}
	\label{technical}
For any $\epsilon_2>0$ and $\delta_2>0$ there exists some  $y_2=y_2(\epsilon,\delta_2)$ such that 

\begin{align}
	\mathbb{P}_x\left(\sum_{\substack{q|p-1\\ q\neq 2}}\frac{\log\log q}{\log q}<\epsilon_2\Big\rvert\text{$p$ is $y_2$-rough}\right)>1-\delta_2. \nonumber
\end{align}
\end{lem}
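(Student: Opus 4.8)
The plan is to estimate the conditional first moment of the sum and then invoke Markov's inequality, in the spirit of Lemma~\ref{phi}; the difference is that here the conditioning on $y_2$-roughness cannot be discarded and must be dealt with head on. Write $S(p):=\sum_{q\mid p-1,\ q\neq 2}\tfrac{\log\log q}{\log q}$ and observe that if $p$ is $y_2$-rough then every prime factor of $p-1$ other than $2$ exceeds $y_2$, so that $S(p)=\sum_{q\mid p-1,\ q>y_2}\tfrac{\log\log q}{\log q}$; taking $y_2\geq 16$ we may also assume that $t\mapsto\tfrac{\log\log t}{\log t}$ is decreasing for $t>y_2$. By Markov's inequality it then suffices to produce, for every $\epsilon_2,\delta_2>0$, a choice of $y_2$ for which $\mathbb{E}_x\big(S(p)\mid\text{$p$ is $y_2$-rough}\big)<\epsilon_2\delta_2$ once $x$ is large enough.

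To this end set $Q_0:=\prod_{2<q'\leq y_2}q'$, which is a fixed integer once $y_2$ is fixed, let $N:=\#\{p\leq x:\ p\ \text{is}\ y_2\text{-rough}\}$, and for a prime $q>y_2$ let $M_q:=\#\{p\leq x:\ q\mid p-1,\ p\ \text{is}\ y_2\text{-rough}\}$, so that $\mathbb{E}_x\big(S(p)\mid\text{$p$ is $y_2$-rough}\big)=\tfrac1N\sum_{q>y_2}\tfrac{\log\log q}{\log q}M_q$. Here the trivial bound $M_q\leq\pi(x;q,1)$ is too lossy, since dropping the roughness condition costs a factor of order $\log y_2$ -- precisely the order of $\pi(x)/N$ -- so instead one evaluates $M_q$ and $N$ asymptotically. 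Both are counts of primes in a bounded union of residue classes: $N$ counts primes in the $\prod_{q'\mid Q_0}(q'-2)$ classes $b\pmod{Q_0}$ with $(b,Q_0)=1$ and $b\not\equiv 1\pmod{q'}$ for every $q'\mid Q_0$, while $M_q$ counts primes in the analogous classes $\pmod{qQ_0}$ subject to the extra condition $b\equiv 1\pmod q$. Since $Q_0$ is fixed and, say, $q\leq x^{1/3}$, the moduli $qQ_0$ lie comfortably inside the range of the Bombieri--Vinogradov theorem (as used in the proof of Lemma~\ref{divisorofprimes}), so summing the resulting errors over such $q$ costs only $\ll_{y_2}x(\log x)^{-2}$, and one gets $N=c(Q_0)\pi(x)\big(1+o_{y_2}(1)\big)$ together with $M_q=\tfrac{c(Q_0)}{q-1}\pi(x)+\mathrm{err}_q$, where $c(Q_0):=\prod_{q'\mid Q_0}\tfrac{q'-2}{q'-1}>0$. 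The decisive observation is that $c(Q_0)$ cancels in the ratio $M_q/N$, i.e.\ conditioning on $y_2$-roughness is asymptotically harmless, whence $\mathbb{E}_x\big(S(p)\mid\text{$p$ is $y_2$-rough}\big)=\sum_{y_2<q\leq x^{1/3}}\tfrac{\log\log q}{(q-1)\log q}+o_{y_2}(1)$.

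It remains to dispose of the tail $q>x^{1/3}$ and to finish. As $p-1<x$ has at most two prime divisors exceeding $x^{1/3}$ and $\tfrac{\log\log q}{\log q}\ll\tfrac{\log\log x}{\log x}$ for $x^{1/3}<q<x$, the total contribution of such $q$ to $\sum_{p\leq x,\ y_2\text{-rough}}S(p)$ is $\ll\tfrac{\log\log x}{\log x}N$, hence is $o(1)$ after dividing by $N$. For the main sum, partial summation with the prime number theorem gives $\sum_{q>y_2}\tfrac{\log\log q}{(q-1)\log q}\ll\int_{y_2}^{\infty}\tfrac{\log\log t}{t(\log t)^2}\,dt\ll\tfrac{\log\log y_2}{\log y_2}$, which tends to $0$ as $y_2\to\infty$. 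Thus, given $\epsilon_2,\delta_2>0$, one fixes $y_2$ so large that this bound is $<\epsilon_2\delta_2/2$; then $\mathbb{E}_x\big(S(p)\mid\text{$p$ is $y_2$-rough}\big)<\epsilon_2\delta_2$ for all large $x$, and Markov's inequality yields the lemma. The main obstacle is exactly this forced passage to a conditional estimate: the simple quotient bound $\mathbb{P}_x(S(p)\geq\epsilon_2\mid\text{$p$ is $y_2$-rough})\leq\mathbb{P}_x(S(p)\geq\epsilon_2)/\mathbb{P}_x(p\ \text{is}\ y_2\text{-rough})$ only yields something of size $\asymp\tfrac{\log\log y_2}{\epsilon_2}$, which does not tend to $0$, so -- unlike in Lemma~\ref{phi} -- one cannot reduce to an unconditional statement; one must instead carry out the Bombieri--Vinogradov bookkeeping over the moduli $qQ_0$ with their $\prod_{q'\mid Q_0}(q'-2)$ residue classes and exploit the cancellation of the Mertens-type factor $c(Q_0)$.
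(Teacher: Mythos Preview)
Your argument is correct and in fact more careful than the paper's own proof. The paper proceeds exactly as in Lemma~\ref{phi}: it bounds the \emph{unconditional} first moment $\mathbb{E}_x\bigl[\sum_{q\mid p-1,\,q>y_2}\tfrac{\log\log q}{\log q}\bigr]$ via Brun--Titchmarsh, obtaining a quantity $\ll\tfrac{\log\log y_2}{\log y_2}$, applies Markov, and then asserts without further comment that this yields the conditional statement. You have correctly spotted that this last passage is not automatic: dividing the unconditional Markov bound by $\mathbb{P}_x(p\ \text{is}\ y_2\text{-rough})\asymp(\log y_2)^{-1}$ leaves a quantity of order $\epsilon_2^{-1}\log\log y_2$, which diverges as $y_2\to\infty$, so the paper's proof as written has a gap at precisely the point you identify.

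Your remedy---computing the conditional moment directly by running Bombieri--Vinogradov over the bounded family of residue classes modulo $qQ_0$, so that the Mertens-type factor $c(Q_0)=\prod_{q'\mid Q_0}\tfrac{q'-2}{q'-1}$ appears in both $M_q$ and $N$ and cancels in the ratio---is exactly what is needed to close this gap. The remainder of your estimate (the tail $q>x^{1/3}$ contributing $O\bigl(\tfrac{\log\log x}{\log x}\bigr)$ per prime, and partial summation giving $\sum_{q>y_2}\tfrac{\log\log q}{(q-1)\log q}\ll\tfrac{\log\log y_2}{\log y_2}$ for the main term) matches the paper line for line. So your route differs from the paper's only in that you actually justify the conditioning; what you gain is a rigorous proof, at the modest cost of tracking a fixed number of residue classes through the Bombieri--Vinogradov error term.
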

\begin{proof}
Let $\epsilon_2,\delta_2>0$ be given, then the Lemma is equivalent to show that for all but $\delta_2$ primes up to $x$ we have the bound 
\begin{align}
\sum_{\substack{q|p-1\\ q>y_2}}\frac{\log\log q}{\log q}<\epsilon_2. \nonumber
\end{align}
The proof of this is very similar to the one of Lemma (\ref{phi}); so we firstly compute the expectation of the aforementioned sum, splitting into terms with $q<\sqrt{x}$ and $q>\sqrt{x}$. The letter (sum) contains at most 2 terms so can be bounded by $O(\log\log x/\log x)$,  to estimate former we switch the expectation and the sum and use Burn-Titchmarsh Theorem  to obtain that  
\begin{align}
	\mathbb{E}_x\left[\sum_{\substack{q|p-1\\q>y_2}}\frac{\log\log q}{\log q}\right]&= 	\mathbb{E}_x\left[\sum_{\substack{q|p-1\\\sqrt{x}>q>y_2}}\frac{\log\log q}{\log q}\right]+ 	\mathbb{E}_x\left[\sum_{\substack{q|p-1\\q>\sqrt{x}}}\frac{\log\log q}{\log q}\right]\nonumber \\
	&= \sum_{y_2<q<\sqrt{x}}\frac{\log\log q}{\log q}\sum_{q|p-1}1 +O\left(\frac{\log\log x}{\log x}\right) \ll \sum_{y_2<q<\sqrt{x}} \frac{\log\log q}{\varphi(q)\log q}.\nonumber
\end{align}
Now we observe that if we denote by $q_n$ the $n$-th prime the the Prime Number Theorem implies that $q_n\asymp n\log n$ and so we can bound the above sum as 
\begin{align}\sum_{y_2<q<\sqrt{x}} \frac{\log\log q}{\varphi(q)\log q}\ll \sum_{n_y<n}\frac{\log\log n}{n\log^2n}\ll \frac{\log\log n_y}{\log n_y}. \nonumber
\end{align}
where $n(y_2)$ is defined so that $q_{n(y_2)}$ is the smallest prime bigger then $y_2$. By Markov's inequality we then obtain that 
\begin{align}
		\mathbb{P}_x\left(\sum_{\substack{q|p-1\\q>y_2}}\frac{\log\log q}{\log q}\geq \epsilon_2\right)\leq \epsilon_2^{-1}\mathbb{E}\left[\sum_{q>y_2}\frac{ \log\log q}{\log q}\right]\ll \epsilon_2^{-1}\frac{\log\log n(y_2)}{\log n(y_2)} \nonumber
\end{align}
Hence taking $n(y_2)$ large enough in terms of $\epsilon_2$ and $\delta_2$ we obtain the Lemma. 
\end{proof}
We are finally ready to prove the main result:  
\begin{proof}[proof of Theorem \ref{Main theorem}] Let $\epsilon>0$ and $\delta>0$ be given; take $\delta_1=\delta/2$ and $\epsilon_1=\epsilon/2$ in Proposition \ref{main3}, provided that $p$ is $y_1$-rough (with $y_1$ given in the Proposition) we  have  
	 \begin{align}
	 g(p)\ll \exp\left[\log(p)\left(\frac{1}{4\sqrt{e}}
	  +\sum_{j=2}^{r}\frac{10}{u(b_j)}+\epsilon/2\right)\right] \nonumber
	  \end{align}
	a part from the primes failing condition $(1)$. Then, by Corollary \ref{corVin}, bearing in mind the definition of $b_i$ given in the Proposition,  we can bound the above\footnote{Here we lower bound $u(b_j)$ by $u(q)$ where $q$ is the large prime $q|b_j$. Considering $u(b_j)$ instead would make the computation much more messy and change the value of the sum only by a constant, as it can be seen by the fact that for almost all primes if $q_j$ is $j$-th prime divisor of $p-1$ then $\log\log q_j \approx j$. }  as 
	  
	  \begin{align}
	g(p)\ll \exp\left[\log(p)\left(\frac{1}{4\sqrt{e}}
	+\sum_{\substack{q|p-1 \\ q\neq 2}}\frac{10 \cdot \log\log q}{\log q}+\epsilon/2\right)\right]. \label{23}
		\end{align}
	Now pick $\delta_2=\delta/2$ and $\epsilon_2=\epsilon/20$ in Lemma \ref{technical} and let $y=\max\{y_1,y_2\}$ (with $y_2$ given by the lemma) then outside the event defined by condition $(1)$ and $(2)$ which has size at most $\delta_1+\delta_2=\delta$ and provided that $p$ is $y$-rough we have both (\ref{23}) and 
	\begin{align}
		\sum_{\substack{q|p-1 \\ q\neq 2}} \frac{10\log\log q}{\log q}\leq \epsilon_2/2 \label{24}
	\end{align}
	and the Theorem follows. 
\end{proof}
The proof of the Corollary then follows by a simple application of inclusion-exclusion principle:
\begin{proof}[proof of Corollary \ref{Main Corollary}]
We firstly observe that given two events $A,B$ then 
\begin{align}
	\mathbb{P}(A\cap B)=\mathbb{P}(A|B)\mathbb{P}(B). \nonumber
\end{align} 
Let $\epsilon>0$ be given, take $\delta=\epsilon$  and let $y=y(\epsilon)$ be given by the Theorem. Now  take $A$ to be the event $p$ satisfies conditions $(1)$, $(2)$ and $B$ the event $p$ is $y$-rough, then the Theorem tells us that $P(A|B)>1-\epsilon$ so it is enough to show that $P(B)$ is no-zero.  By the Prime Number Theorem in arithmetic progressions we know that for any fixed $d$ the number of primes up to $x$  congruent to $1$ mod $d$ is $\pi(x)/\varphi(d) + O(x/\log^2x)$ and so, by the inclusion exclusion principle, we obtain that   
\begin{align}
\mathbb{P}_x(B)=\frac{1}{\pi(x)}\sum_{d|P(y)}\mu(d)\sum_{\substack{p\leq x \\d|p-1}}1=  \sum_{d|P(y)}\frac{\mu(d)}{\varphi(d)} + O\left(\frac{1}{\log x}\right) \nonumber
\end{align}
where $P(y)=\prod_{2<q\leq y}q$.  The above sum can be estimated using Merten's Theorem as 
\begin{align}
	 \sum_{d|P(y)}\frac{\mu(d)}{\varphi(d)}= \prod_{2<q\leq y}\left(1-\frac{1}{\varphi(q)}\right)\asymp\log^{-1} y \nonumber
\end{align}
which is non-zero and hence the first part of the Corollary follows. The second part is a direct consequence of the first and the Theorem. 
\end{proof}

\section*{Acknowledgement}
The author would like to thank Andrew Granville  for his help and guidance in writing this manuscript and Igor Shparlinski for his comments on an early version of the work. This work was supported by the Engineering and Physical Sciences Research Council [EP/L015234/1].  
The EPSRC Centre for Doctoral Training in Geometry and Number Theory (The London School of Geometry and Number Theory), University College London.

	\end{document}